\date{}
\newtheorem{theo}{Theorem}
\newtheorem{lemma}{Lemma}
\newtheorem{coro}{Corollary}
\newtheorem{prop}{Proposition}
\begin{document}

\title{Differential operator for discrete Gegenbauer--Sobolev orthogonal  polynomials: eigenvalues and asymptotics}

\author{
Lance L. Littlejohn$^{1}$, Juan F. Ma\~{n}as--Ma\~{n}as$^{2}$,\\ Juan J. Moreno--Balc\'{a}zar$^{3}$, Richard Wellman$^{4}$\\ \\
\scriptsize{$^1$Department of Mathematics, Baylor University,  United States.}\\
\scriptsize{$^{2,3}$Departamento de Matem\'{a}ticas, Universidad de Almer\'{i}a, Spain.}\\
\scriptsize{$^3$Instituto Carlos I de F\'{i}sica Te\'{o}rica y Computacional, Spain.}\\
\scriptsize{$^4$Department of Mathematics, Westminster College SLC, United States.}\\
\scriptsize{{$^{1}$Lance\_Littlejohn@baylor.edu, $^{2}$jmm939@ual.es, $^{3}$balcazar@ual.es, $^{4}$rwellman@westminstercollege.edu}}
}

\maketitle

\begin{abstract}

We consider the following discrete Sobolev inner product involving the Gegenbauer weight
$$(f,g)_S:=\int_{-1}^1f(x)g(x)(1-x^2)^{\alpha}dx+M\big[f^{(j)}(-1)g^{(j)}(-1)+f^{(j)}(1)g^{(j)}(1)\big],$$
where $\alpha>-1,$ $j\in \mathbb{N}\cup \{0\},$ and $M>0.$ Let $\{Q_n^{(\alpha,M,j)}\}_{n\geq0}$ be the sequence of orthogonal  polynomials with respect to the above inner product. These polynomials are eigenfunctions of a differential operator $\mathbf{T}. $ We establish the asymptotic behavior of the corresponding eigenvalues. Furthermore, we calculate the exact value
$$r_0 = \lim_{n\rightarrow \infty}\frac{\log \left(\max_{x\in [-1,1]} |\widetilde{Q}_n^{(\alpha,M,j)}(x)|\right)}{\log \widetilde{\lambda}_n},$$
where $\{\widetilde{Q}_n^{(\alpha,M,j)}\}_{n\geq0}$ are  the sequence of orthonormal   polynomials with respect to this Sobolev inner product. This value $r_0$ is related to the convergence of a series in a  left--definite space. Finally, we study the Mehler--Heine type asymptotics for $\{Q_n^{(\alpha,M,j)}\}_{n\geq0}.$

\end{abstract}

\textbf{Keywords:} Sobolev orthogonality; differential operators; asymptotics.

\section{Introduction}

In the framework of Sobolev orthogonality, we consider the nonstandard inner product
\begin{equation}\label{pro}
(f,g)_S:=\int_{-1}^1f(x)g(x)(1-x^2)^{\alpha}dx+M\big[f^{(j)}(-1)g^{(j)}(-1)+f^{(j)}(1)g^{(j)}(1)\big],
\end{equation}
where $\alpha>-1,$ $j\in \mathbb{N}\cup \{0\},$ and $M>0.$ It is usually known as a Gegenbauer--Sobolev inner product because its absolutely continuous part involves the classical Gegenbauer weight.  We denote by $\{Q_n^{(\alpha,M,j)}\}_{n\geq0}$ the sequence of orthogonal polynomials with respect to (\ref{pro}). Along this paper we also use the  Sobolev orthonormal polynomials, denoted by $\{\widetilde{Q}_n^{(\alpha,M,j)}\}_{n\geq0}= \left\{ \frac{Q_n^{(\alpha,M,j)}}{\sqrt{(Q_n^{(\alpha,M,j)}, Q_n^{(\alpha,M,j)})_S}}\right\}_{n\geq0}. $

Consider a sequence of polynomials, $\{P_n\}_{n\geq0},$ orthogonal with respect to a symmetric inner-product $\phi$ that also satisfy a (possibly infinite order) spectral differential equation. In \cite{Kwon} the authors give conditions for  polynomials orthogonal with respect to a related discrete Sobolev inner product of the form $(f,g)_S =\phi(f,g) + Mf^{(j)}(c) g^{(j)}(c)$ to  also satisfy a (possibly infinite order) spectral differential equation. H. Bavinck in \cite{Bavinck}  extended this result to polynomials $\{Q_n\}_{n\geq0}$ orthogonal with respect to discrete Sobolev inner products of the form  $(f,g)_S =\phi(f,g) + Mf^{(j)}(c_1)g^{(j)}(c_1) + Nf^{(k)}(c_2)g^{(k)}(c_2)$. Specifically Bavinck constructs a differential equation $\ell$ and eigenvalues  $\widetilde{\lambda}_n$ such that
\[ \ell [Q_n](x) = \sum_{i=1}^{\infty}a_i(x)Q_n^{(i)}(x) = \widetilde{\lambda}_n Q_n(x). \]
Central to the construction of this differential equation  is the reproducing polynomial kernel $K_n(x,y) := \sum_{i=0}^n\frac{P_i(x)P_i(y)}{\phi(P_i,P_i)}$.

Here we consider the Gegenbauer--Sobolev  polynomials $Q_n^{(\alpha,M,j)}$ orthogonal with respect to the discrete Sobolev--type inner product (\ref{pro}). We are interested in properties of the related polynomial kernel scaled by the eigenvalues:
\[K(x,y;r)=  \sum_{i=0}^{\infty}\widetilde{\lambda}_i^{-r} \frac{Q_i^{(\alpha,M,j)}(x)Q_i^{(\alpha,M,j)}(y)}{(Q_i^{(\alpha,M,j)},Q_i^{(\alpha,M,j)})_S}\]

 They are useful for some applications that two of the authors \cite{wellman}  is developing. Since
that work is unfinished we only give a briefly motivation without details. The results
on this paper are self--contained and do not depend on this motivation.

Let $\left(H,\right<\cdot,\cdot\left>_H\right)$ be the completion space of polynomials under the inner product $(f,g)_S$. Let $\mathbf{T}$ be the self-adjoint operator in $H$ generated by the differential expression $\mathbf{L} + M\mathbf{A}$, where $\mathbf{L}$ is the linear differential operator associated with the Gegenbauer polynomials and $\mathbf{A}$ is an operator that we will define later. $\mathbf{T}$ exists as an unbounded operator in $H$ since $\widetilde{\lambda}_n\rightarrow\infty$. In this case the \emph{left--definite} space $H_r(\mathbf{T})$ with inner--product
\[\left<f,g\right>_{r}:=\left<\mathbf{T}^rf,g\right>_H\]
 on the linear manifold $\mathcal{D}(\mathbf{T}^{r/2})$ yields a Hilbert space. Furthermore we may take the power $\mathbf{T}^s$ as a self-adjoint operator in a left-definite space (see \cite{littlejohn-wellman} for details).

Now take $r_0$ to be the least number such that for each $r>r_0$ the  kernel $K(x,y;r)$ converges both absolutely and in the left--definite space $H_{r}(\mathbf{T}).$ Then for $r>r_0$ the sequence $\left\{\frac{Q_n^{(\alpha,M,j)}(x)}{\sqrt{\widetilde{\lambda}_n^{r}(Q_n^{(\alpha,M,j)},Q_n^{(\alpha,M,j)})_S}}\right\}_{n\ge 0}$ forms a complete, orthonomal basis for $H_{r}(\mathbf{T})$ and the reproducing property  follows from the Parseval identity:
 \[\left<\mathbf{T}^r K(x,\cdot;r_0^++r),f\right>_{H_{r_0^+}}=\left<K(x,\cdot;r_0^+),f\right>_{H_{r_0^+}} = f(x).\]

 Notice this gives $K(x,y;r)$ as the reproducing kernel for the $r-r_0^+$ left--definite space of the left--definite operator acting in the $r_0^+$ left--definite space generated by $\mathbf{T}$.

It can be shown \cite{wellman} that
\begin{equation*}
%\label{eqn-r0}
r_0 = \lim_{n\rightarrow \infty}\frac{\log \left(\sup_{x\in [-1,1]} |\widetilde{Q}_n^{(\alpha,M,j)}(x)|\right)}{\log \widetilde{\lambda}_n}  \end{equation*}
 In this paper we find a value for $r_0$ valid for all $\alpha > -1$, $M>0$ and $j\in \mathbb{N}\cup \{0\}$.

Another goal of this paper will be to obtain a Mehler--Heine formula for the polynomials $Q_n^{(\alpha,M,j)}$. These Mehler--Heine type formulae are interesting twofold: they provide the scaled asymptotics for $Q_n^{(\alpha,M,j)}$ on compact sets of the complex plane and they supply us with asymptotic information about the location of the zeros of these polynomials in terms of the zeros of other known special functions.

The structure of the paper is the following: in Section 2 we give a background about Gegenbauer orthogonal polynomials, $C_n^{(\alpha)}, $ introducing the properties of these polynomials that will be used along the paper. In Section 3 we establish connection formulae between the polynomials $C_n^{(\alpha)} $ and $Q_n^{(\alpha,M,j)}$ which are useful to state an upper bound for $||Q_n^{(\alpha,M,j)}||_{\infty}=\max_{x \in [-1,1]}|Q_n^{(\alpha,M,j)}(x) |.$ In Section 4 we give the asymptotic behavior of the eigenvalues associated with the linear differential operator $\mathbf{T}=\mathbf{L}+M \mathbf{A}$, i.e. $$ \mathbf{T}Q_n^{(\alpha,M, j)}(x)=(\mathbf{L}+M\mathbf{A})Q_n^{(\alpha,M, j)}(x)=\widetilde{\lambda}_nQ_n^{(\alpha,M,j)}(x). $$
Thus, we calculate the value of $r_0$ in Theorem \ref{main1}. Finally, Section 5  is devoted to the study of the Mehler--Heine type asymptotics for the Sobolev polynomials $Q_n^{(\alpha,M,j)}$ giving additional information about the asymptotic behavior of the zeros of  these polynomials.

Along the text we will use the following notation: if $a_n$ and $b_n$ are two sequences of real numbers, then  $a_n\approx b_n$ means  $\displaystyle\lim_{n\to+\infty}a_n/b_n=1.$

\section{Gegenbauer orthogonal polynomials: a background}

In \cite{sz}  Gegenbauer polynomials are considered as those polynomials which are orthogonal with respect to the inner product
$$
(f,g)=\int_{-1}^1f(x)g(x)(1-x^2)^{\lambda-1/2}dx, \quad \lambda>-1/2.  $$
They are denoted by $P_n^{(\lambda)}$ and it is used the normalization
$$
P_n^{(\lambda)}(1)= \frac{\Gamma(n+2\lambda)}{\Gamma(n+1) \Gamma(2\lambda)}.
$$
We take $\alpha:=\lambda-1/2$ and consider the polynomials $\displaystyle{\frac{P_n^{(\lambda)}(x)}{P_n^{(\lambda)}(1)}}. $ We denote by $\{C_n^{(\alpha)}\}_{n\geq0}$ this sequence of Gegenbauer orthogonal polynomials (this normalization is also used in \cite{AFB1999}). In this way, it is obvious that the polynomials $C_n^{(\alpha)}$ are orthogonal with respect to
$$(f,g)_{\alpha}:=\int_{-1}^1f(x)g(x)(1-x^2)^{\alpha}dx,$$
 with  $C_n^{(\alpha)}(1)=1,$ and using the symmetry of these polynomials we have $C_n^{(\alpha)}(-1)=(-1)^n.$ So, the inner product  (\ref{pro}) can be rewritten as
$$(f,g)_S=(f,g)_{\alpha}+M\big[f^{(j)}(-1)g^{(j)}(-1)+f^{(j)}(1)g^{(j)}(1)\big].$$

Now, we  recall some properties of  Gegenbauer orthogonal  polynomials. These properties can be found in \cite{AFB1999} or \cite{sz} among others.
\begin{enumerate}
  \item \textbf{Derivatives:} \begin{equation}\label{deri}  \big(C_n^{(\alpha)}(x)\big)^{(k)}:=\frac{\partial^k C_n^{(\alpha)}(x)}{\partial x^k}=\frac{(-1)^k(n+2\alpha+1)_k(-n)_k}{2^k(\alpha+1)_k}C_{n-k}^{(\alpha+k)}(x), \end{equation} with $k=0,1,\dots,$ where $(a)_k$ denotes the Pochhammer's symbol,  i.e.,  $(a)_{k}= a (a+ 1)\cdots (a+k-1) = \frac{\Gamma(a+k)}{\Gamma(a)},\quad k\geq 1, \quad (a)_{0}=1.$
  \item \textbf{Differential equation and eigenvalues:} \begin{equation*}\label{DE} (x^2-1)\big(C_n^{(\alpha)}(x)\big)^{(2)}+2(\alpha+1)x \big(C_n^{(\alpha)}(x)\big)^{(1)}=\lambda_n C_n^{(\alpha)}(x), \end{equation*}
      \begin{equation}\label{EiV}\lambda_n=n(n+2\alpha+1).\end{equation}
  \item \textbf{Leading coefficient:} \begin{equation}\label{lc} k_n(\alpha):=\frac{(n+2\alpha+1)_n}{2^n(\alpha+1)_n}=
      \frac{\Gamma(2n+2\alpha+1)\Gamma(\alpha+1)}{2^n\Gamma(n+\alpha+1)\Gamma(n+2\alpha+1)}. \end{equation}
  \item \textbf{Squared norm:} \begin{equation}\label{norm} ||C_n^{(\alpha)}||^2_{\alpha}:=\int_{-1}^1(C_n^{(\alpha)}(x))^2(1-x^2)^{\alpha} dx = \frac{2^{2\alpha+1}\Gamma^2(\alpha+1)\Gamma(n+1)}{(2n+2\alpha+1)\Gamma(n+2\alpha+1)}. \end{equation}
\end{enumerate}

Now, we are going to calculate some limits which  will be used later. To do this, we take into account (see, for example, \cite[f. (5.11.13)]{askey} or \cite[f. (7)]{mavjs})

\begin{equation}\label{stirling}
\lim_{n\to+\infty}\frac{n^{b-a}\Gamma(n+a)}{\Gamma(n+b)}=1.
\end{equation}
Thus,
\begin{equation}\label{pochamer}
\lim_{n\to+\infty}\frac{(n+a)_k}{n^k}=\lim_{n\to+\infty}\frac{\Gamma(n+a+k)}{\Gamma(n+a)n^k}=1.
\end{equation}

Furthermore,
\begin{equation}\label{pochamer-1}
\lim_{n\to+\infty}\frac{(-n+a)_k}{n^k}=(-1)^k.
\end{equation}

In the next lemma we provide some useful asymptotic behaviors of Gegenbauer polynomials.

\begin{lemma}\label{prop cnk(1)} For $k\in \mathbb{N}\cup \{0\}$, we have
\begin{equation}
\label{cnk(1)}
\lim_{n\to+\infty}\frac{\left(C_n^{(\alpha)}(1)\right)^{(k)}}{n^{2k}}=\frac{1}{2^k(\alpha+1)_k}.
\end{equation}
Furthermore,
\begin{equation}\label{asNORM}
\lim_{n\to+\infty}||C_n^{(\alpha)}||^2_{\alpha}n^{2\alpha+1}=
2^{2\alpha}\Gamma^2(\alpha+1).
\end{equation}

\end{lemma}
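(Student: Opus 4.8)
The plan is to prove Lemma \ref{prop cnk(1)} by direct substitution into the explicit formulae for Gegenbauer polynomials recalled above, reducing everything to the asymptotic identities \eqref{stirling}, \eqref{pochamer}, and \eqref{pochamer-1}.

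For the first claim \eqref{cnk(1)}, I would start from the derivative formula \eqref{deri} evaluated at $x=1$, using $C_{n-k}^{(\alpha+k)}(1)=1$:
\[
\left(C_n^{(\alpha)}(1)\right)^{(k)}=\frac{(-1)^k(n+2\alpha+1)_k(-n)_k}{2^k(\alpha+1)_k}.
\]
Dividing by $n^{2k}$ and splitting the quotient as $\dfrac{(n+2\alpha+1)_k}{n^k}\cdot\dfrac{(-n)_k}{n^k}$, the first factor tends to $1$ by \eqref{pochamer} (with $a=2\alpha+1$) and the second factor tends to $(-1)^k$ by \eqref{pochamer-1} (with $a=0$). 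The sign $(-1)^k$ from \eqref{deri} cancels against the $(-1)^k$ coming from the limit of $(-n)_k/n^k$, leaving exactly $\dfrac{1}{2^k(\alpha+1)_k}$, as required.

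For the second claim \eqref{asNORM}, I would take the closed form of the squared norm \eqref{norm},
\[
\|C_n^{(\alpha)}\|_\alpha^2=\frac{2^{2\alpha+1}\Gamma^2(\alpha+1)\,\Gamma(n+1)}{(2n+2\alpha+1)\,\Gamma(n+2\alpha+1)},
\]
multiply by $n^{2\alpha+1}$, and write it as
\[
2^{2\alpha+1}\Gamma^2(\alpha+1)\cdot\frac{n}{2n+2\alpha+1}\cdot\frac{n^{2\alpha}\,\Gamma(n+1)}{\Gamma(n+2\alpha+1)}.
\]
The middle factor tends to $\tfrac12$; the last factor tends to $1$ by \eqref{stirling} with $a=1$, $b=2\alpha+1$ (so $b-a=2\alpha$). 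Multiplying the constants gives $2^{2\alpha}\Gamma^2(\alpha+1)$.

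There is no real obstacle here — the lemma is a routine consequence of Stirling-type asymptotics once the classical formulae \eqref{deri}, \eqref{norm} are in hand; the only point requiring a little care is bookkeeping the sign factors in the first part (making sure the two occurrences of $(-1)^k$ cancel rather than reinforce) and checking that $C_{n-k}^{(\alpha+k)}(1)=1$ holds under the chosen normalization, which it does since $C_m^{(\beta)}(1)=1$ for every admissible $\beta$ and every $m\ge 0$.
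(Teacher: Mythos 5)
Your proof is correct and follows essentially the same route as the paper: evaluate \eqref{deri} at $x=1$ using $C_{n-k}^{(\alpha+k)}(1)=1$, then apply \eqref{pochamer} and \eqref{pochamer-1} so that the two factors of $(-1)^k$ cancel, and deduce \eqref{asNORM} directly from \eqref{norm} via \eqref{stirling}. The sign bookkeeping and the constant $2^{2\alpha+1}\cdot\tfrac12=2^{2\alpha}$ both check out.
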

\begin{proof} Using (\ref{deri}), (\ref{pochamer}-\ref{pochamer-1}) and  the fact that $C_n^{\alpha}(1)=1, $ we obtain
\begin{eqnarray*}
\lim_{n\to+\infty}\frac{\left(C_n^{(\alpha)}(1)\right)^{(k)}}{n^{2k}}&=& \frac{(-1)^k}{2^k(\alpha+1)_k}\lim_{n\to\infty}\frac{(n+2\alpha+1)_k(-n)_k}
{ n^{2k}}\\
&=&\frac{(-1)^k}{2^k(\alpha+1)_k}(-1)^k=\frac{1}{2^k(\alpha+1)_k}.\\
\end{eqnarray*}
Formula (\ref{asNORM}) is deduced in a straightforward way from (\ref{norm}) using (\ref{stirling}).
\end{proof}

We will use the following notation:
\begin{eqnarray}
K_n^{(j,k)}(x,y)&=&\sum_{i=0}^{n}\frac{\left(C_i^{(\alpha)}(x)\right)^{(j)}\left(C_i^{(\alpha)}(y)\right)^{(k)}}
{||C_i^{(\alpha)}||^2_{\alpha}}, \nonumber\\
\kappa_{2n}^{(j,k)}(x,y)&=&\sum_{i=0}^{n}\frac{\left(C_{2i}^{(\alpha)}(x)\right)^{(j)}\left(C_{2i}^{(\alpha)}(y)\right)^{(k)}}
{||C_{2i}^{(\alpha)}||^2_{\alpha}}, \label{evenkernels}\\
\widetilde{\kappa}_{2n}^{(j,k)}(x,y)&=&\sum_{i=0}^{n}\frac{\left(C_{2i+1}^{(\alpha)}(x)\right)^{(j)}\left(C_{2i+1}^{(\alpha)}(y)\right)^{(k)}}
{||C_{2i+1}^{(\alpha)}||^2_{\alpha}} \label{oddkernels} .
\end{eqnarray}
Notice that $K_n^{(0,0)}(x,y)= K_n(x,y)$ are the usual kernel polynomials associated with Gegenbauer polynomials.

\begin{prop}\label{kernel(k,s)(1,1)}
 Let $k$ and $s$ be nonnegative integer numbers. Then,
\begin{eqnarray}\label{kernel(1,1)}
\lim_{n\to+\infty} \frac{K_{n-1}^{(k,s)}(1,1)}{n^{2k+2s+2\alpha+2}}&=&
\frac{1}{2^{2\alpha+k+s+1}} C_{k,s}, \\
\label{kappa(k,s)(1,1)}
\lim_{n\to+\infty}\frac{\kappa_{2(n-1)}^{(k,s)}(1,1)}{n^{2k+2s+2\alpha+2}}&=& \lim_{n\to+\infty}\frac{\widetilde{\kappa}_{2(n-1)}^{(k,s)}(1,1)}{n^{2k+2s+2\alpha+2}}=2^{k+s}C_{k,s},
\end{eqnarray}
where $$C_{k,s}= \frac{1}{(k+s+\alpha+1)\Gamma(\alpha+k+1)\Gamma(\alpha+s+1)}.$$
\end{prop}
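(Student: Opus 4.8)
The plan is to reduce everything to the single-index asymptotic \eqref{cnk(1)} together with the norm asymptotic \eqref{asNORM}, and then sum. First I would write the $i$-th summand of $K_{n-1}^{(k,s)}(1,1)$ as
\[
\frac{\left(C_i^{(\alpha)}(1)\right)^{(k)}\left(C_i^{(\alpha)}(1)\right)^{(s)}}{\|C_i^{(\alpha)}\|_\alpha^2}
= \frac{\left(C_i^{(\alpha)}(1)\right)^{(k)}}{i^{2k}}\cdot\frac{\left(C_i^{(\alpha)}(1)\right)^{(s)}}{i^{2s}}\cdot\frac{i^{2\alpha+1}}{\|C_i^{(\alpha)}\|_\alpha^2}\cdot i^{2k+2s-2\alpha-1},
\]
so that by Lemma~\ref{prop cnk(1)} the first three factors tend, as $i\to\infty$, to
\[
\frac{1}{2^k(\alpha+1)_k}\cdot\frac{1}{2^s(\alpha+1)_s}\cdot\frac{1}{2^{2\alpha}\Gamma^2(\alpha+1)}
= \frac{1}{2^{2\alpha+k+s}\,\Gamma(\alpha+k+1)\Gamma(\alpha+s+1)},
\]
using $(\alpha+1)_k\Gamma(\alpha+1)=\Gamma(\alpha+k+1)$. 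Hence the $i$-th term behaves like $c\, i^{2k+2s-2\alpha-1}$ with $c$ the constant just displayed.

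Next I would invoke the elementary fact that if $t_i \approx c\, i^{p-1}$ with $p>0$ then $\sum_{i=1}^{n} t_i \approx \frac{c}{p}\, n^{p}$ (a Stolz--Ces\`aro / Riemann-sum argument; the finitely many low-order terms $i=0,\dots$ and the $i=0$ ambiguity in dividing by $i$ are harmless). Here $p = 2k+2s+2\alpha+2 > 0$ since $\alpha>-1$, so
\[
K_{n-1}^{(k,s)}(1,1)\approx \frac{c}{2k+2s+2\alpha+2}\, n^{2k+2s+2\alpha+2}
= \frac{1}{2^{2\alpha+k+s+1}}\cdot\frac{1}{(k+s+\alpha+1)\Gamma(\alpha+k+1)\Gamma(\alpha+s+1)}\, n^{2k+2s+2\alpha+2},
\]
which is exactly \eqref{kernel(1,1)} with the stated $C_{k,s}$. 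For \eqref{kappa(k,s)(1,1)}, the even and odd partial kernels $\kappa_{2(n-1)}^{(k,s)}$ and $\widetilde\kappa_{2(n-1)}^{(k,s)}$ are the same sums restricted to indices $i=2m$ (resp.\ $i=2m+1$), $m=0,\dots,n-1$; the $m$-th term then behaves like $c\,(2m)^{2k+2s-2\alpha-1} = c\,2^{2k+2s-2\alpha-1} m^{2k+2s-2\alpha-1}$, and summing via the same lemma gives $\dfrac{c\,2^{2k+2s-2\alpha-1}}{2k+2s+2\alpha+2} n^{2k+2s+2\alpha+2}$. Comparing with the constant in \eqref{kernel(1,1)}, this equals $2^{k+s} C_{k,s}\, n^{2k+2s+2\alpha+2}$ after simplification (the factor $2^{2k+2s-2\alpha-1}$ against $2^{-(2\alpha+k+s+1)}$ yields $2^{k+s}$), and the odd case is identical since the parity of $i$ does not affect the leading asymptotics in Lemma~\ref{prop cnk(1)}.

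I expect the only real point requiring care is the passage from the termwise asymptotic $t_i\approx c\,i^{p-1}$ to the partial-sum asymptotic $\sum t_i \approx \frac{c}{p} n^p$; one must be slightly careful that $p>0$ (guaranteed by $\alpha>-1$) so that the series diverges and the leading behavior is dictated by the tail rather than by any fixed initial segment. This is a standard Stolz--Ces\`aro step: writing $a_n = \sum_{i=0}^{n} t_i$ and $b_n = n^{p}$, one has $(a_n-a_{n-1})/(b_n-b_{n-1}) = t_n/(n^p-(n-1)^p)\to c/p$ since $n^p-(n-1)^p\approx p\,n^{p-1}$. Everything else is bookkeeping with Pochhammer symbols and the Gamma-function identity $(\alpha+1)_k=\Gamma(\alpha+k+1)/\Gamma(\alpha+1)$.
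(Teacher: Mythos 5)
Your argument is correct and is essentially the paper's own proof: the paper likewise combines Lemma~\ref{prop cnk(1)} with Stolz's criterion applied to $K_{n-1}^{(k,s)}(1,1)/n^{2k+2s+2\alpha+2}$, which is exactly your ``termwise asymptotic plus summation lemma'' step. One repeated sign typo: since $\|C_i^{(\alpha)}\|_\alpha^2\, i^{2\alpha+1}\to 2^{2\alpha}\Gamma^2(\alpha+1)$, the convergent factor is $\bigl(i^{2\alpha+1}\|C_i^{(\alpha)}\|_\alpha^2\bigr)^{-1}$ and the residual power is $i^{2k+2s+2\alpha+1}$, not $i^{2k+2s-2\alpha-1}$ --- your subsequent computations (the choice $p=2k+2s+2\alpha+2$ and the factor $2^{k+s}$ in the even/odd case) already use the correct exponent, so nothing downstream is affected.
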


\begin{proof}  First,  we observe that
$$n^{2\alpha+2k+2s+2}-(n-1)^{2\alpha+2k+2s+2}\approx (2\alpha+2k+2s+2)n^{2\alpha+2k+2s+1}.$$

\noindent Using  Stolz's criterion,  (\ref{cnk(1)}), and (\ref{asNORM}) we get
\begin{eqnarray*}
\lim_{n\to+\infty} \frac{K_{n-1}^{(k,s)}(1,1)}{n^{2k+2s+2\alpha+2}}
&=&\lim_{n\to+\infty}\frac{K_{n-1}^{(k,s)}(1,1)-K_{n-2}^{(k,s)}(1,1)}{n^{2k+2s+2\alpha+2}-(n-1)^{2k+2s+2\alpha+2}}\\
&=&\lim_{n\to+\infty}\frac{\frac{\left(C_{n-1}^{(\alpha)}(1)\right)^{(k)}\left(C_{n-1}^{(\alpha)}(1)\right)^{(s)}}{||C_{n-1}^{(\alpha)}||^2_{\alpha}}}
{2(\alpha+k+s+1)n^{2\alpha+2k+2s+1}}\\
&=&\frac{1}{2(\alpha+k+s+1)}\lim_{n\to+\infty}\frac{\left(C_{n-1}^{(\alpha)}(1)\right)^{(k)}}{n^{2k}}
\frac{\left(C_{n-1}^{(\alpha)}(1)\right)^{(s)}}{n^{2s}}\frac{1}{||C_{n-1}^{(\alpha)}||^2_{\alpha}n^{2\alpha+1}}\\
&=&\frac{1}{2(\alpha+k+s+1)}\frac{1}{2^k(\alpha+1)_k}\frac{1}{2^s(\alpha+1)_s}\frac{1}{2^{2\alpha}\Gamma^2(\alpha+1)}
\end{eqnarray*}

Finally, using $\displaystyle (\alpha+1)_k=\frac{\Gamma(\alpha+k+1)}{\Gamma(\alpha+1)}$ we obtain (\ref{kernel(1,1)}).
To establish (\ref{kappa(k,s)(1,1)}) we can proceed in the same way.

\end{proof}

\section{Connection formulae and some asymptotic behaviors}

 It is well known that $\{C_i^{(\alpha)}\}_{i=0}^m$ constitute a basis of the linear space $\mathbb{P}_m[x]$ of polynomials with real coefficients and degree at most $m$. Therefore, the Gegenbauer--Sobolev  polynomials orthogonal with respect to (\ref{pro}), with leading coefficient $k_n(\alpha)$ given in (\ref{lc}), can be expressed as
\begin{eqnarray*}\label{expansion}
Q_{2n}^{(\alpha,M,j)}(x)&=&C_{2n}^{(\alpha)}(x)+\sum_{i=0}^{n-1}a_{2n,2i}C_{2i}^{(\alpha)}(x), \\ \label{expansion2}
Q_{2n+1}^{(\alpha,M,j)}(x)&=&C_{2n+1}^{(\alpha)}(x)+\sum_{i=0}^{n-1}a_{2n+1,2i+1}C_{2i+1}^{(\alpha)}(x).
\end{eqnarray*}
\noindent Thus, applying a well--established procedure (see, for example, \cite[Sect. 2]{maRon} among others),  we can deduce the following connection formulae.

\begin{prop} \label{firstconn} We have,
\begin{eqnarray}\label{GSeven}
Q_{2n}^{(\alpha,M,j)}(x)&=&C_{2n}^{(\alpha)}(x)-\frac{2M\big(C_{2n}^{(\alpha)}(1)\big)^{(j)}\kappa_{2(n-1)}^{(j,0)}(1,x)}{1+2M\kappa_{2(n-1)}^{(j,j)}(1,1)}
\\
\label{GSodd}
 Q_{2n+1}^{(\alpha,M, j)}(x)&=&
C_{2n+1}^{(\alpha)}(x)-\frac{2M\big(C_{2n+1}^{(\alpha)}(1)\big)^{(j)}\widetilde{\kappa}_{2(n-1)}^{(j,0)}(1,x)}{1+2M\widetilde{\kappa}_{2(n-1)}^{(j,j)}(1,1)}
\end{eqnarray}
\end{prop}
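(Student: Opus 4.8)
The plan is to follow the standard procedure for discrete Sobolev orthogonal polynomials (as in \cite[Sect. 2]{maRon}), exploiting the symmetry of the inner product \eqref{pro}. First I would note that since the weight $(1-x^2)^{\alpha}$ is even and the two point evaluations at $\pm 1$ enter symmetrically, one has $(f(-\cdot),g(-\cdot))_S=(f,g)_S$; hence $(-1)^nQ_n^{(\alpha,M,j)}(-x)$ is again a Sobolev orthogonal polynomial of degree $n$ with leading coefficient $k_n(\alpha)$, so by uniqueness $Q_n^{(\alpha,M,j)}(-x)=(-1)^nQ_n^{(\alpha,M,j)}(x)$. This justifies the even/odd expansions written just before the statement, whose top term is $C_{2n}^{(\alpha)}$ (respectively $C_{2n+1}^{(\alpha)}$) because both families share the leading coefficient \eqref{lc}.

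Next I would compute the Fourier coefficients $a_{2n,2i}$ for $0\le i\le n-1$. On the one hand, pairing the expansion of $Q_{2n}^{(\alpha,M,j)}$ with $C_{2i}^{(\alpha)}$ in $(\cdot,\cdot)_{\alpha}$ and using orthogonality of the Gegenbauer family gives $(Q_{2n}^{(\alpha,M,j)},C_{2i}^{(\alpha)})_{\alpha}=a_{2n,2i}\,\|C_{2i}^{(\alpha)}\|_{\alpha}^2$. On the other hand, since $\deg C_{2i}^{(\alpha)}=2i\le 2n-2<2n$, Sobolev orthogonality gives $(Q_{2n}^{(\alpha,M,j)},C_{2i}^{(\alpha)})_S=0$; expanding this via \eqref{pro} and using the even symmetry of both $Q_{2n}^{(\alpha,M,j)}$ and $C_{2i}^{(\alpha)}$ to collapse the two boundary terms into one yields
\[
a_{2n,2i}\,\|C_{2i}^{(\alpha)}\|_{\alpha}^2=-2M\,\big(C_{2n}\text{-independent factor}\big),\qquad\text{precisely}\qquad a_{2n,2i}\,\|C_{2i}^{(\alpha)}\|_{\alpha}^2=-2M\,\big(Q_{2n}^{(\alpha,M,j)}\big)^{(j)}(1)\,\big(C_{2i}^{(\alpha)}\big)^{(j)}(1).
\]
Dividing by $\|C_{2i}^{(\alpha)}\|_{\alpha}^2$, substituting back into the expansion, and recognising $\sum_{i=0}^{n-1}\frac{(C_{2i}^{(\alpha)})^{(j)}(1)\,C_{2i}^{(\alpha)}(x)}{\|C_{2i}^{(\alpha)}\|_{\alpha}^2}$ as $\kappa_{2(n-1)}^{(j,0)}(1,x)$ from \eqref{evenkernels}, one obtains
\[
Q_{2n}^{(\alpha,M,j)}(x)=C_{2n}^{(\alpha)}(x)-2M\,\big(Q_{2n}^{(\alpha,M,j)}\big)^{(j)}(1)\,\kappa_{2(n-1)}^{(j,0)}(1,x).
\]

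Finally I would close the formula by pinning down the remaining constant $\big(Q_{2n}^{(\alpha,M,j)}\big)^{(j)}(1)$: differentiate the last identity $j$ times in $x$, evaluate at $x=1$, and solve the linear equation
\[
\big(Q_{2n}^{(\alpha,M,j)}\big)^{(j)}(1)\Big(1+2M\,\kappa_{2(n-1)}^{(j,j)}(1,1)\Big)=\big(C_{2n}^{(\alpha)}\big)^{(j)}(1),
\]
which is legitimate since $\kappa_{2(n-1)}^{(j,j)}(1,1)\ge 0$ and $M>0$, so the bracket never vanishes. Plugging the resulting value back in gives \eqref{GSeven}; repeating the argument verbatim with the odd-index Gegenbauer polynomials and the kernel $\widetilde{\kappa}_{2(n-1)}^{(j,k)}$ of \eqref{oddkernels} gives \eqref{GSodd}. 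I do not expect a genuine obstacle here: the only points needing care are the symmetry reduction of the two point masses to a single boundary term and the (immediate) non-vanishing of the denominator.
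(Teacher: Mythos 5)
Your proposal is correct and follows essentially the same route as the paper: expand $Q_{2n}^{(\alpha,M,j)}$ in the even Gegenbauer basis, use $(Q_{2n}^{(\alpha,M,j)},C_{2i}^{(\alpha)})_S=0$ together with parity to collapse the two boundary terms and solve for the Fourier coefficients, recognise the kernel $\kappa_{2(n-1)}^{(j,0)}(1,x)$, and then determine $\big(Q_{2n}^{(\alpha,M,j)}\big)^{(j)}(1)$ by differentiating $j$ times and evaluating at $x=1$. The only additions beyond the paper's argument are your explicit justification of the parity of the Sobolev polynomials and the remark that the denominator $1+2M\kappa_{2(n-1)}^{(j,j)}(1,1)$ never vanishes, both of which are correct and harmless.
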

\begin{proof} For $i=0,\ldots, n-1$ fixed, we have
\begin{eqnarray*}
0&=&(Q_{2n}^{(\alpha,M,j)}(x),C_{2i}^{(\alpha)}(x))_{S}
=\left(C_{2n}^{(\alpha)}(x)+\sum_{k=1}^{n-1}a_{2n,2k}C_{2k}^{(\alpha)}(x),C_{2i}^{(\alpha)}(x)\right)_{S}   \\
&=&\left(C_{2n}^{(\alpha)}(x),C_{2i}(x)^{(\alpha)}\right)_{\alpha}+\sum_{k=0}^{n-1}a_{2n,2k}
\left(C_{2k}^{(\alpha)}(x),C_{2i}(x)^{(\alpha)}\right)_{\alpha}\\
&+&M\big[\big(Q_{2n}^{(\alpha,M,j)}(-1)\big)^{(j)}\big(C_{2i}^{(\alpha)}(-1)\big)^{(j)}+
\big(Q_{2n}^{(\alpha,M,j)}(1)\big)^{(j)}\big(C_{2i}^{(\alpha)}(1)\big)^{(j)}\big]\\
&=&a_{2n,2i}||C_{2i}^{(\alpha)}||^2_{\alpha}+2M\big(Q_{2n}^{(\alpha,M,j)}(1)\big)^{(j)}\big(C_{2i}^{(\alpha)}(1)\big)^{(j)},
\end{eqnarray*}
thus,  $$a_{2n,2i}=\frac{-2M\big(Q_{2n}^{(\alpha,M,j)}(1)\big)^{(j)}\big(C_{2i}^{(\alpha)}(1)\big)^{(j)}}{||C_{2i}^{(\alpha)}||^2_{\alpha}},$$ and
\begin{eqnarray*}
Q_{2n}^{(\alpha,M,j)}(x)&=&C_{2n}^{(\alpha)}(x)+\sum_{i=0}^{n-1}\frac{-2M
\big(Q_{2n}^{(\alpha,M,j)}(1)\big)^{(j)}\big(C_{2i}^{(\alpha)}(1)\big)^{(j)}}{||C_{2i}^{(\alpha)}||^2_{\alpha}}C_{2i}^{(\alpha)}(x)\\
&=&C_{2n}^{(\alpha)}(x)-2M\big(Q_{2n}^{(\alpha,M,j)}(1)\big)^{(j)}\sum_{i=0}^{n-1}\frac{\big(C_{2i}^{(\alpha)}(1)\big)^{(j)}C_{2i}^{(\alpha)}(x)}
{||C_{2i}^{(\alpha)}||^2_{\alpha}}\\
&=&C_{2n}^{(\alpha)}(x)-2M\big(Q_{2n}^{(\alpha,M,j)}(1)\big)^{(j)}\kappa_{2(n-1)}^{(j,0)}(1,x).
\end{eqnarray*}
Then, we derive the above expression $j$ times  and  evaluate at  $x=1$ obtaining
\begin{equation} \label{jderb1}
\big(Q_{2n}^{(\alpha,M,j)}(1)\big)^{(j)}=\frac{\big(C_{2n}^{(\alpha)}(1)\big)^{(j)}}{1+2M\kappa_{2(n-1)}^{(j,j)}(1,1)},
\end{equation}
which proves (\ref{GSeven}). For the odd case, relation (\ref{GSodd})  is established in the same way.
\end{proof}

Proposition \ref{firstconn} is very useful to obtain the following relative asymptotics at the point $x=1.$
\begin{prop} \label{relasy1}
Let  $k$ be a nonnegative integer. Then, we have
\begin{equation*}\label{ASeven}
\lim_{n\to+\infty}\frac{\big(Q_{n}^{(\alpha,M,j)}(1)\big)^{(k)}}{\big(C_{n}^{(\alpha)}(1)\big)^{(k)}}=
\frac{k-j}{j+k+\alpha+1}.
\end{equation*}
\end{prop}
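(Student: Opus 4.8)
The plan is to work from the connection formulae in Proposition \ref{firstconn}, treating the even and odd cases separately since the structure is identical up to replacing $\kappa$ by $\widetilde\kappa$. I will focus on the even case, $n=2m$. Starting from (\ref{GSeven}), I differentiate $k$ times with respect to $x$ and evaluate at $x=1$, obtaining
\[
\big(Q_{2m}^{(\alpha,M,j)}(1)\big)^{(k)}=\big(C_{2m}^{(\alpha)}(1)\big)^{(k)}-\frac{2M\big(C_{2m}^{(\alpha)}(1)\big)^{(j)}\,\kappa_{2(m-1)}^{(j,k)}(1,1)}{1+2M\kappa_{2(m-1)}^{(j,j)}(1,1)}.
\]
Dividing both sides by $\big(C_{2m}^{(\alpha)}(1)\big)^{(k)}$ gives
\[
\frac{\big(Q_{2m}^{(\alpha,M,j)}(1)\big)^{(k)}}{\big(C_{2m}^{(\alpha)}(1)\big)^{(k)}}
=1-\frac{2M\big(C_{2m}^{(\alpha)}(1)\big)^{(j)}}{\big(C_{2m}^{(\alpha)}(1)\big)^{(k)}}\cdot\frac{\kappa_{2(m-1)}^{(j,k)}(1,1)}{1+2M\kappa_{2(m-1)}^{(j,j)}(1,1)},
\]
so the whole problem reduces to computing the limit of the second term.

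Next I would feed in the asymptotics already established in the excerpt. By Lemma \ref{prop cnk(1)}, $\big(C_{2m}^{(\alpha)}(1)\big)^{(j)}\approx \dfrac{(2m)^{2j}}{2^j(\alpha+1)_j}$ and similarly with $k$ in place of $j$, so the prefactor ratio behaves like $\dfrac{2^k(\alpha+1)_k}{2^j(\alpha+1)_j}(2m)^{2j-2k}$. For the kernel quantities I use Proposition \ref{kernel(k,s)(1,1)} in the even-index form (\ref{kappa(k,s)(1,1)}): with $n$ there replaced by $m$, $\kappa_{2(m-1)}^{(j,k)}(1,1)\approx 2^{j+k}C_{j,k}\,m^{2j+2k+2\alpha+2}$ and $\kappa_{2(m-1)}^{(j,j)}(1,1)\approx 2^{2j}C_{j,j}\,m^{2j+2j+2\alpha+2}$, which in particular tends to $+\infty$, so the $1$ in the denominator is negligible and $1+2M\kappa_{2(m-1)}^{(j,j)}(1,1)\approx 2M\kappa_{2(m-1)}^{(j,j)}(1,1)$. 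Substituting, the factor $2M$ cancels, the powers of $m$ combine to $(2m)^{2j-2k}\cdot m^{(2j+2k+2\alpha+2)-(4j+2\alpha+2)}=(2m)^{2j-2k}m^{2k-2j}=2^{2j-2k}$, and collecting the remaining constants one gets
\[
\lim_{m\to\infty}\frac{2M\big(C_{2m}^{(\alpha)}(1)\big)^{(j)}}{\big(C_{2m}^{(\alpha)}(1)\big)^{(k)}}\cdot\frac{\kappa_{2(m-1)}^{(j,k)}(1,1)}{1+2M\kappa_{2(m-1)}^{(j,j)}(1,1)}
=\frac{2^k(\alpha+1)_k}{2^j(\alpha+1)_j}\cdot 2^{2j-2k}\cdot\frac{2^{j+k}C_{j,k}}{2^{2j}C_{j,j}},
\]
and plugging in $C_{j,k}=\dfrac{1}{(j+k+\alpha+1)\Gamma(\alpha+j+1)\Gamma(\alpha+k+1)}$ together with $(\alpha+1)_k=\Gamma(\alpha+k+1)/\Gamma(\alpha+1)$ collapses everything to $\dfrac{2j+2\alpha+2}{j+k+\alpha+1}$. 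Hence the ratio tends to $1-\dfrac{2j+2\alpha+2}{j+k+\alpha+1}=\dfrac{k-j}{j+k+\alpha+1}$, as claimed. The odd case is word-for-word the same using (\ref{GSodd}) and the $\widetilde\kappa$ version of (\ref{kappa(k,s)(1,1)}), which has the identical leading constant, so the limit over odd indices coincides; combining the two subsequences gives the stated limit over all $n$.

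The only genuinely delicate point is bookkeeping the powers of $m$ (equivalently of $n$) and the powers of $2$ coming from the $2m$ versus $m$ mismatch between Lemma \ref{prop cnk(1)} (which is stated with argument $n$) and Proposition \ref{kernel(k,s)(1,1)} (whose even-index statement is indexed so that the limit variable is the half-index). One must be careful that in (\ref{kappa(k,s)(1,1)}) the normalization $n^{2k+2s+2\alpha+2}$ refers to the half-index, so when the outer polynomial degree is $2m$ one substitutes $n\mapsto m$ there while substituting $n\mapsto 2m$ in (\ref{cnk(1)}); the factors $2^{2j-2k}$ that this produces are exactly what is needed for the constants to telescope. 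Everything else is a routine application of the product rule for limits, justified because each individual factor has a finite nonzero limit (and the denominator diverges, so no indeterminacy arises).
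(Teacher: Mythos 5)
Your strategy is exactly the paper's: differentiate (\ref{GSeven}) $k$ times, evaluate at $x=1$, divide by $\big(C_{2m}^{(\alpha)}(1)\big)^{(k)}$, and feed in Lemma \ref{prop cnk(1)} and Proposition \ref{kernel(k,s)(1,1)}; your bookkeeping of the powers of $m$ and of $2$ (the $2m$ versus $m$ mismatch) is correct, and the reduction of the problem to a single limit of a quotient with divergent denominator is sound. The only problem is an arithmetic slip in the final constant. Carrying out the collapse you describe gives
\begin{equation*}
\frac{2^k(\alpha+1)_k}{2^j(\alpha+1)_j}\cdot 2^{2j-2k}\cdot\frac{2^{j+k}C_{j,k}}{2^{2j}C_{j,j}}
=\frac{(\alpha+1)_k}{(\alpha+1)_j}\cdot\frac{C_{j,k}}{C_{j,j}}
=\frac{\Gamma(\alpha+k+1)}{\Gamma(\alpha+j+1)}\cdot\frac{(2j+\alpha+1)\,\Gamma(\alpha+j+1)}{(j+k+\alpha+1)\,\Gamma(\alpha+k+1)}
=\frac{2j+\alpha+1}{j+k+\alpha+1},
\end{equation*}
since $C_{j,j}$ carries the factor $(j+j+\alpha+1)=2j+\alpha+1$ in its denominator (you appear to have replaced it by $2(j+\alpha+1)=2j+2\alpha+2$). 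Your stated value $\frac{2j+2\alpha+2}{j+k+\alpha+1}$ is therefore wrong, and the last displayed identity $1-\frac{2j+2\alpha+2}{j+k+\alpha+1}=\frac{k-j}{j+k+\alpha+1}$ is false as written (the left side equals $\frac{k-j-\alpha-1}{j+k+\alpha+1}$). With the corrected constant one gets $1-\frac{2j+\alpha+1}{j+k+\alpha+1}=\frac{k-j}{j+k+\alpha+1}$, which is the claimed limit; everything else in your argument, including the treatment of the odd subsequence via $\widetilde{\kappa}$, stands.
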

\begin{proof}
We only prove the even case since the proof for the odd case is the same one. We derive the expression (\ref{GSeven}) $k$ times and evaluate at $x=1$. Then, we divide by $\displaystyle \big(C_{2n}^{(\alpha)}(1)\big)^{(k)}$ and use the limit relations (\ref{cnk(1)}) and (\ref{kappa(k,s)(1,1)}).
\begin{eqnarray*}
\lim_{n\to+\infty}\frac{\big(Q_{2n}^{(\alpha,M,j)}(1)\big)^{(k)}}{\big(C_{2n}^{(\alpha)}(1)\big)^{(k)}}
&=&1-\lim_{n\to+\infty}\frac{2M\big(C_{2n}^{(\alpha)}(1)\big)^{(j)}\kappa_{2(n-1)}^{(j,k)}(1,1)}
{\left(1+2M\kappa_{2(n-1)}^{(j,j)}(1,1)\right)\big(C_{2n}^{(\alpha)}(1)\big)^{(k)}}\\
&=&1-\lim_{n\to+\infty}\frac{2M\frac{\big(C_{2n}^{(\alpha)}(1)\big)^{(j)}}{n^{2j}}\frac{\kappa_{2(n-1)}^{(j,k)}(1,1)}{n^{2j+2k+2\alpha+2}}}
{\frac{\big(C_{2n}^{(\alpha)}(1)\big)^{(k)}}
{n^{4j+2k+2\alpha+2}}+2M\frac{\kappa_{2(n-1)}^{(j,j)}(1,1)}{n^{4j+2\alpha+2}}\frac{\big(C_{2n}^{(\alpha)}(1)\big)^{(k)}}{n^{2k}}}\\
&=&1-\frac{(2j+\alpha+1)(\alpha+1)_k\Gamma(\alpha+j+1)}{(\alpha+1)_j(j+k+\alpha+1)\Gamma(\alpha+k+1)}\\
&=&1-\frac{2j+\alpha+1}{j+k+\alpha+1}=\frac{k-j}{j+k+\alpha+1}.
\end{eqnarray*}
\end{proof}

Now, we are going to establish that the norm of the Gegenbauer--Sobolev orthogonal polynomials, induced by the nonstandard inner product (\ref{pro}),  behaves like the norm of classical Gegenbauer polynomials.
\begin{prop} \label{p-ABnorm2n}
We have,
\begin{equation*}\label{ABnorm2n}
\lim_{n\to \infty} \frac{||Q_{n}^{(\alpha,M,j)}||_S}{||C_{n}^{(\alpha)}||_{\alpha}}= 1.
\end{equation*}
\end{prop}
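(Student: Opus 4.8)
The plan is to exploit that $Q_n^{(\alpha,M,j)}$ and $C_n^{(\alpha)}$ share the leading coefficient $k_n(\alpha)$ of (\ref{lc}), so $C_n^{(\alpha)}-Q_n^{(\alpha,M,j)}\in\mathbb{P}_{n-1}[x]$. Since $Q_n^{(\alpha,M,j)}$ is $(\cdot,\cdot)_S$-orthogonal to $\mathbb{P}_{n-1}[x]$, we get $\|Q_n^{(\alpha,M,j)}\|_S^2=(Q_n^{(\alpha,M,j)},C_n^{(\alpha)})_S$. Splitting this according to (\ref{pro}): the absolutely continuous part equals $\|C_n^{(\alpha)}\|_\alpha^2$, because $C_n^{(\alpha)}$ is $(\cdot,\cdot)_\alpha$-orthogonal to $\mathbb{P}_{n-1}[x]$ (equivalently, the kernel correction in (\ref{GSeven})--(\ref{GSodd}) has degree $\le n-1$); and since the $Q_n^{(\alpha,M,j)}$ inherit the parity of $C_n^{(\alpha)}$ (each being a combination of Gegenbauer polynomials of a single parity), the evaluations at $\pm1$ coincide, so the two boundary terms merge. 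This produces, for all $n$,
$$\|Q_n^{(\alpha,M,j)}\|_S^2=\|C_n^{(\alpha)}\|_\alpha^2+2M\big(Q_n^{(\alpha,M,j)}(1)\big)^{(j)}\big(C_n^{(\alpha)}(1)\big)^{(j)}.$$

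After dividing by $\|C_n^{(\alpha)}\|_\alpha^2$, the statement reduces to showing that the correction term tends to $0$. In the even case I would substitute the closed form (\ref{jderb1}), rewriting this term as
$$\frac{2M\big(\big(C_{2n}^{(\alpha)}(1)\big)^{(j)}\big)^2}{\big(1+2M\kappa_{2(n-1)}^{(j,j)}(1,1)\big)\,\|C_{2n}^{(\alpha)}\|_\alpha^2},$$
and then insert the orders already established: $\big(C_{2n}^{(\alpha)}(1)\big)^{(j)}$ is of order $n^{2j}$ by (\ref{cnk(1)}), $\kappa_{2(n-1)}^{(j,j)}(1,1)$ is of order $n^{4j+2\alpha+2}$ by (\ref{kappa(k,s)(1,1)}), and $\|C_{2n}^{(\alpha)}\|_\alpha^2$ is of order $n^{-2\alpha-1}$ by (\ref{asNORM}). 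Hence the correction is of order $n^{4j}/(n^{4j+2\alpha+2}\cdot n^{-2\alpha-1})=n^{-1}\to0$, so $\|Q_{2n}^{(\alpha,M,j)}\|_S^2/\|C_{2n}^{(\alpha)}\|_\alpha^2\to1$; taking square roots and repeating verbatim for the odd subsequence (via (\ref{GSodd}) and the odd analogue of (\ref{jderb1})) gives the claim. As a consistency check, Proposition \ref{relasy1} with $k=j$ already shows $\big(Q_n^{(\alpha,M,j)}(1)\big)^{(j)}/\big(C_n^{(\alpha)}(1)\big)^{(j)}\to0$, i.e. the correction has the right direction, but the quantitative estimate coming from (\ref{jderb1}) is what the order count actually needs.

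I do not expect a genuine obstacle: once the displayed identity for $\|Q_n^{(\alpha,M,j)}\|_S^2$ is in hand, everything is driven by the asymptotics of Lemma \ref{prop cnk(1)} and Proposition \ref{kernel(k,s)(1,1)}. The only mildly delicate points are making the parity reduction rigorous, so that the $f^{(j)}(-1)$ and $f^{(j)}(1)$ contributions genuinely merge into $2M\big(Q_n^{(\alpha,M,j)}(1)\big)^{(j)}\big(C_n^{(\alpha)}(1)\big)^{(j)}$, and checking the exponent balance $4j-(4j+2\alpha+2)+(2\alpha+1)=-1<0$, which guarantees the correction is negligible in the limit rather than perturbing it by a nonzero constant.
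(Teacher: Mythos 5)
Your proof is correct and follows essentially the same route as the paper: the identity $\|Q_n^{(\alpha,M,j)}\|_S^2=(Q_n^{(\alpha,M,j)},C_n^{(\alpha)})_S=\|C_n^{(\alpha)}\|_\alpha^2+2M\big(Q_n^{(\alpha,M,j)}(1)\big)^{(j)}\big(C_n^{(\alpha)}(1)\big)^{(j)}$, followed by substituting (\ref{jderb1}) and invoking the asymptotics (\ref{cnk(1)}), (\ref{kappa(k,s)(1,1)}) and (\ref{asNORM}). If anything, your order count for the correction term \emph{divided by} $\|C_n^{(\alpha)}\|_\alpha^2$ (yielding $n^{-1}\to 0$) is slightly more careful than the paper's argument, which only observes that the correction itself tends to zero even though $\|C_n^{(\alpha)}\|_\alpha^2$ also vanishes when $\alpha>-1/2$.
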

\begin{proof} Again, we only prove the even case.
\begin{eqnarray*}
\big(Q_{2n}^{(\alpha,M,j)},Q_{2n}^{(\alpha,M,j)}\big)_S
=\big(Q_{2n}^{(\alpha,M,j)},C_{2n}^{(\alpha)}\big)_S
=||C_{2n}^{(\alpha)}||^2_{\alpha}+2M\big(Q_{2n}^{(\alpha,M,j)}(1)\big)^{(j)}\big(C_{2n}^{(\alpha)}(1)\big)^{(j)}.
\end{eqnarray*}
It is enough to observe that applying  (\ref{cnk(1)}),  (\ref{kappa(k,s)(1,1)}), and (\ref{jderb1}),  we get
\begin{eqnarray*}
\lim_{n\to+\infty}2M\big(Q_{2n}^{(\alpha,M,j)}(1)\big)^{(j)}\big(C_{2n}^{(\alpha)}(1)\big)^{(j)}=
\lim_{n\to+\infty}\frac{2M \left(\big(C_{2n}^{(\alpha)}(1)\big)^{(j)}\right)^2}{1+2M\kappa_{2(n-1)}^{(j,j)}(1,1)}=0,
\end{eqnarray*}
which proves the result.
\end{proof}

The number of terms of the connection formula given in Proposition \ref{firstconn} depends on $n,$ so this number increases when $n$ grows. To avoid this, we can give another \textit{connection} formula in which the polynomials $Q_n^{(\alpha,M,j)}$ can be expressed as a finite linear combination of polynomials not depending on $n$.
\begin{prop}\label{th-rcf}
There exists a family of real numbers $\{\gamma_{n,i}\}_{i=0}^{j+1}$, not identically zero, such that the following connection formula
holds
\begin{equation}\label{RCF}
Q_n^{(\alpha,M,j)}(x)=\sum_{i=0}^{j+1}\gamma_{n,i}(1-x^2)^i\big(C_{n-i}^{(\alpha+i)}(x)\big)^{(i)},\qquad  n\geq 2j+2.
\end{equation}
\end{prop}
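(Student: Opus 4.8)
The plan is to produce a subspace $V_n\subseteq\mathbb{P}_n[x]$ of dimension $j+2$ that is spanned by the $j+2$ polynomials appearing on the right--hand side of (\ref{RCF}) and that contains $Q_n^{(\alpha,M,j)}$; the statement then follows immediately. Write $\Pi_n[x]$ for the set of $p\in\mathbb{P}_n[x]$ with $p(-x)=(-1)^np(x)$, and note $Q_n^{(\alpha,M,j)}\in\Pi_n[x]$ since $(\cdot,\cdot)_S$ is invariant under $x\mapsto -x$. Put $g_{n,i}(x):=(1-x^2)^i\big(C_{n-i}^{(\alpha+i)}(x)\big)^{(i)}$ for $0\le i\le j+1$. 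By (\ref{deri}), as soon as $n\ge 2j+2$ we have $\big(C_{n-i}^{(\alpha+i)}(x)\big)^{(i)}=c_{n,i}\,C_{n-2i}^{(\alpha+2i)}(x)$ with $c_{n,i}\neq 0$ (here $\alpha>-1$ and $n\ge 2j+2$ guarantee all the relevant Pochhammer factors are nonzero), so $g_{n,i}(x)=c_{n,i}(1-x^2)^iC_{n-2i}^{(\alpha+2i)}(x)$; in particular $g_{n,i}\in\Pi_n[x]$. Since $\big(C_{n-i}^{(\alpha+i)}(1)\big)^{(i)}=c_{n,i}\neq 0$ (using $C_{n-2i}^{(\alpha+2i)}(1)=1$) while $(1-x^2)^i=(1-x)^i(1+x)^i$, the polynomial $g_{n,i}$ vanishes at $x=1$ to order exactly $i$. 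Evaluating a relation $\sum_{i=0}^{j+1}a_ig_{n,i}=0$ together with its first $j+1$ derivatives at $x=1$ then forces $a_0=\cdots=a_{j+1}=0$; hence the $g_{n,i}$ are linearly independent and $V_n:=\operatorname{span}\{g_{n,i}:0\le i\le j+1\}\subseteq\Pi_n[x]$ has dimension $j+2$.

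The next step is to show that each $g_{n,i}$ is $(\cdot,\cdot)_\alpha$--orthogonal to $(1-x^2)^{j+1}\mathbb{P}_{n-2j-3}[x]$. For $q\in\mathbb{P}_{n-2j-3}[x]$,
\[ \big(g_{n,i},(1-x^2)^{j+1}q\big)_\alpha=c_{n,i}\int_{-1}^1 C_{n-2i}^{(\alpha+2i)}(x)\,\big[(1-x^2)^{j+1-i}q(x)\big]\,(1-x^2)^{\alpha+2i}\,dx, \]
and here $i\le j+1$ makes $(1-x^2)^{j+1-i}q$ a polynomial of degree at most $2(j+1-i)+(n-2j-3)=n-2i-1<n-2i$; since $C_{n-2i}^{(\alpha+2i)}$ is orthogonal to $\mathbb{P}_{n-2i-1}[x]$ with respect to the weight $(1-x^2)^{\alpha+2i}$, the integral is zero (when $n=2j+2$ the space $\mathbb{P}_{n-2j-3}[x]$ is trivial and there is nothing to check). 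As $\big(g_{n,i},(1-x^2)^{j+1}q\big)_\alpha=\int_{-1}^1 g_{n,i}(x)q(x)(1-x^2)^{\alpha+j+1}\,dx$, this says $g_{n,i}$ belongs to $\widetilde V_n:=\{p\in\mathbb{P}_n[x]:\int_{-1}^1 p(x)q(x)(1-x^2)^{\alpha+j+1}\,dx=0\ \text{for all }q\in\mathbb{P}_{n-2j-3}[x]\}$, whence $V_n\subseteq\widetilde V_n\cap\Pi_n[x]$. Expanding in the basis $\{C_\ell^{(\alpha+j+1)}\}_{\ell=0}^n$ gives $\widetilde V_n=\operatorname{span}\{C_\ell^{(\alpha+j+1)}:n-2j-2\le\ell\le n\}$, so $\widetilde V_n\cap\Pi_n[x]=\operatorname{span}\{C_\ell^{(\alpha+j+1)}:\ell\equiv n\ (\mathrm{mod}\ 2),\ n-2j-2\le\ell\le n\}$ has dimension exactly $j+2$. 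Comparing with the previous paragraph, $V_n=\widetilde V_n\cap\Pi_n[x]$.

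To finish, it suffices to check $Q_n^{(\alpha,M,j)}\in\widetilde V_n$. Any polynomial $(1-x^2)^{j+1}q$ with $q\in\mathbb{P}_{n-2j-3}[x]$ has degree at most $n-1$ and vanishes at $\pm1$ to order $j+1>j$, so its $j$--th derivative vanishes at $\pm1$; the point evaluations in $(\cdot,\cdot)_S$ then contribute nothing, and
\[ \int_{-1}^1 Q_n^{(\alpha,M,j)}(x)q(x)(1-x^2)^{\alpha+j+1}\,dx=\big(Q_n^{(\alpha,M,j)},(1-x^2)^{j+1}q\big)_S=0 \]
by orthogonality of $Q_n^{(\alpha,M,j)}$ to $\mathbb{P}_{n-1}[x]$. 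Hence $Q_n^{(\alpha,M,j)}\in\widetilde V_n\cap\Pi_n[x]=V_n$, which is precisely the asserted formula (\ref{RCF}); the resulting $\{\gamma_{n,i}\}_{i=0}^{j+1}$ are unique by the linear independence established above and are not all zero because $Q_n^{(\alpha,M,j)}\not\equiv 0$.

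I expect the only delicate point to be the bookkeeping in the second paragraph: one must absorb precisely $i$ of the $j+1$ factors $(1-x^2)$ into the weight so that what remains has degree strictly below $\deg C_{n-2i}^{(\alpha+2i)}=n-2i$ — this is exactly where both $n\ge 2j+2$ and $i\le j+1$ are used — and one must carry out the dimension count for $\widetilde V_n\cap\Pi_n[x]$ so that it matches the dimension $j+2$ of $V_n$. Everything else is routine manipulation of degrees and parities; note in particular that with this approach the first connection formula of Proposition \ref{firstconn} is not needed.
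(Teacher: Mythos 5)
Your proof is correct, and it is worth noting that the paper itself gives no argument here: it defers entirely to Theorem 1 of \cite{dls2015} (Pe\~{n}a--Rezola), whose method is precisely the one you reconstruct, namely that $Q_n^{(\alpha,M,j)}$ is orthogonal to $(1-x^2)^{j+1}\mathbb{P}_{n-2j-3}[x]$ with respect to $(\cdot,\cdot)_\alpha$ (quasi-orthogonality with respect to the modified weight $(1-x^2)^{\alpha+j+1}$), combined with the observation that the $j+2$ polynomials $(1-x^2)^i\big(C_{n-i}^{(\alpha+i)}\big)^{(i)}$ are linearly independent elements of that annihilator and a dimension count. Your adaptation to the two symmetric mass points via the parity subspace $\Pi_n[x]$ is exactly the modification the authors allude to when they say the discrete part now sits at two points, and your bookkeeping (nonvanishing of the Pochhammer factors for $n\ge 2j+2$, the degree count $n-2i-1<n-2i$, and the dimension $j+2$ of $\widetilde V_n\cap\Pi_n[x]$) is accurate; so your write-up supplies a complete, self-contained proof of what the paper leaves to a citation.
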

\begin{proof} It was established in a similar framework in \cite[Th. 1]{dls2015}, although in that paper the discrete part of the Sobolev inner product is located at only one point $c$. Being the same procedure,  we prefer to omit the details.
\end{proof}

\begin{prop}\label{conv gamma i}
Let $\{\gamma_{n,i}\}_{i=0}^{j+1}$ be the coefficients given in (\ref{RCF}). Then, $$\lim_{n\to+\infty}\gamma_{n,i}=\gamma_i\in \mathbb{R},\qquad 0\leq i \leq j+1.$$
\end{prop}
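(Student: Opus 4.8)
The plan is to extract from (\ref{RCF}) a $(j+2)\times(j+2)$ linear system for the vector $\vec\gamma_n=(\gamma_{n,0},\dots,\gamma_{n,j+1})$, to show that a suitably normalized version of its coefficient matrix converges to an invertible matrix while its right--hand side converges, and then to conclude by continuity of matrix inversion. Note first that the representation (\ref{RCF}) is \emph{unique}: by (\ref{deri}) the polynomial $\big(C_{n-i}^{(\alpha+i)}(x)\big)^{(i)}$ is a nonzero multiple of $C_{n-2i}^{(\alpha+2i)}(x)$, which is nonzero at $x=1$, so $\phi_{n,i}(x):=(1-x^2)^i\big(C_{n-i}^{(\alpha+i)}(x)\big)^{(i)}$ vanishes at $x=1$ to order exactly $i$; hence $\phi_{n,0},\dots,\phi_{n,j+1}$ are linearly independent and $\gamma_{n,i}$ is well defined. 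The $j+2$ equations I would use are: the comparison of the coefficients of $x^n$ on the two sides of (\ref{RCF}), together with, for $k=0,1,\dots,j$, the identity obtained by differentiating (\ref{RCF}) $k$ times and evaluating at $x=1$. Since the $n$--th summand of (\ref{RCF}) has the same parity as $C_n^{(\alpha)}$, the corresponding conditions at $x=-1$ carry no new information.

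For the derivative equations, $\phi_{n,i}$ vanishes at $x=1$ to order $i$, so $\phi_{n,i}^{(k)}(1)=0$ for $k<i$ and the system is triangular in $i$. Expanding $\phi_{n,i}^{(k)}(1)$ by the Leibniz rule and retaining only the term of highest order in $n$ (the one placing exactly $i$ of the $k$ derivatives on $(1-x^2)^i$), Lemma \ref{prop cnk(1)} gives, for $0\le i\le k\le j$,
\[\lim_{n\to\infty}\frac{\phi_{n,i}^{(k)}(1)}{n^{2k}}=A_{i,k}:=\binom{k}{i}\frac{i!\,(-1)^i}{2^{\,k-i}\,(\alpha+i+1)_k},\]
while $\phi_{n,i}^{(k)}(1)=0$ identically for $i>k$. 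By Proposition \ref{relasy1} together with (\ref{cnk(1)}), the right--hand side of the $k$--th equation, $\big(Q_n^{(\alpha,M,j)}(1)\big)^{(k)}/n^{2k}$, converges to a finite limit $b_k$. For the leading--coefficient equation, the coefficient of $x^n$ in $\phi_{n,i}$ equals $(-1)^i k_{n-i}(\alpha+i)\,(n-i)!/(n-2i)!$, and (\ref{lc}) and (\ref{stirling}) give $(-1)^i k_{n-i}(\alpha+i)\,(n-i)!/\big((n-2i)!\,k_n(\alpha)\big)\to(-1)^i 2^i(\alpha+1)_i$, with right--hand side $1$. Thus, dividing the $k$--th derivative equation by $n^{2k}$ and the leading--coefficient equation by $k_n(\alpha)$, the system takes the form $\widehat M_n\,\vec\gamma_n=\widehat v_n$ with $\widehat M_n\to\widehat M$ and $\widehat v_n\to\widehat v$, all entries finite; and since (\ref{RCF}) is an identity of polynomials, $\vec\gamma_n$ satisfies $\widehat M_n\,\vec\gamma_n=\widehat v_n$ exactly for every $n\ge 2j+2$.

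It remains to check that $\widehat M$ is invertible. Order the rows by $k=0,\dots,j$ followed by the leading--coefficient row, and the columns by $i=0,\dots,j+1$. The upper--left $(j+1)\times(j+1)$ block has entries $A_{i,k}$, which vanish for $i>k$ and equal $A_{k,k}=k!(-1)^k/(\alpha+k+1)_k\ne0$ on the diagonal, so it is triangular and nonsingular; the last column vanishes in every derivative row and equals $(-1)^{j+1}2^{j+1}(\alpha+1)_{j+1}\ne0$ (here $\alpha>-1$ is used) in the leading--coefficient row. Expanding $\det\widehat M$ along that column shows it equals, up to sign, $(\alpha+1)_{j+1}2^{j+1}\prod_{k=0}^{j}A_{k,k}\ne0$. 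Hence $\widehat M_n$ is invertible for all large $n$, so $\vec\gamma_n=\widehat M_n^{-1}\widehat v_n$ for such $n$, and by continuity of matrix inversion $\vec\gamma_n\to\widehat M^{-1}\widehat v=:(\gamma_0,\dots,\gamma_{j+1})\in\mathbb R^{j+2}$, which is the claim.

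The main obstacle is the bookkeeping in the second step: isolating, in each Leibniz expansion, the single term of top order in $n$ and confirming via Lemma \ref{prop cnk(1)} that the remaining terms really are of lower order, and then verifying the triangular shape of $\widehat M$. It is worth stressing that the argument is not circular: Proposition \ref{relasy1} was derived from the kernel connection formulae of Proposition \ref{firstconn}, not from (\ref{RCF}), so it is legitimately available here.
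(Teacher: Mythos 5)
Your argument is correct and follows essentially the same route as the paper: differentiate (\ref{RCF}) $k$ times, evaluate at $x=1$, normalize, and pass to the limit using Lemma \ref{prop cnk(1)}, (\ref{fpag92})-type ratios, and Proposition \ref{relasy1}. The only (harmless) deviation is that the paper takes $k=0,\dots,j+1$, obtaining a fully triangular system solved recursively, whereas you replace the $k=j+1$ equation by the leading--coefficient comparison and close the system with a cofactor expansion; both yield the same conclusion.
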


\begin{proof} As we have commented in the previous proposition, this result was also established in a similar context in \cite[Th. 1]{dls2015} (see also \cite[Th. 2]{mavjs}). But, now the discrete part of our  inner product (\ref{pro}) is concentrated in two points, not only in one like the references cited. Anyway, the technique is the same.  However, we include the main lines of the proof because in this concrete case  we can establish the exact value of $\gamma_i$  which has interest by itself. Thus, in the first step we derive formula (\ref{RCF}) $k$ times and evaluate at $x=1$. Thus, for  $0 \leq k \leq j+1, $  we get
\begin{equation}\label{fpag9}
\big(Q_{n}^{(\alpha,M,j)}(1)\big)^{(k)}
=\sum_{i=0}^k \gamma_{n,i} \binom{k}{i}(-1)^i i!
\left(\sum_{l=0}^{k-i} \frac{i!}{(i-l)!}2^{(i-l)}\big(C_{n-i}^{(\alpha+i)}(1)\big)^{(k-l)}\right).
\end{equation}
Now, we divide (\ref{fpag9}) by $\big(C_{n}^{(\alpha)}(1)\big)^{(k)}$ and taking into account Proposition \ref{relasy1} we can deduce the result if and only if \newline $\lim_{n\to+\infty}\frac{\big(C_{n-i}^{(\alpha+i)}(1)\big)^{(k-l)}}
{\big(C_{n}^{(\alpha)}(1)\big)^{(k)}}\in \mathbb{R}$, with $0\leq l \leq k-i.$ But this is true by Lemma \ref{prop cnk(1)}. In fact,

\begin{equation}\label{fpag92}
\lim_{n\to+\infty}\frac{\big(C_{n-i}^{(\alpha+i)}(1)\big)^{(k-l)}}{\big(C_{n}^{(\alpha)}(1)\big)^{(k)}}
=\left\{
     \begin{array}{ll}
       \frac{(\alpha+1)_k}{(\alpha+i+1)_k}, & \hbox{if $\quad l=0$;} \\
       0, & \hbox{if $\quad 1\leq l\leq k-i$.}
     \end{array}
   \right.
\end{equation}
\end{proof}

We have proved that the sequences $\{\gamma_{n,i}\}_n$ are convergent with $i\in\{0,\ldots, j+1\}$ when $n\to \infty.$ Now, we want to compute explicitly the corresponding limits $\gamma_i$ with $0 \leq i \leq j + 1.$ For $i = 0$, using Proposition \ref{relasy1}, we get
$$ \lim_{n\to+\infty}\frac{Q_{n}^{(\alpha,M,j)}(1)}{C_{n}^{(\alpha)}(1)}=\lim_{n\to+\infty}Q_{n}^{(\alpha,M,j)}(1)=
\lim_{n\to+\infty}\gamma_{n,0}=\frac{-j}{j+\alpha+1}.$$
Thus, we can construct a recursive algorithm based on (\ref{fpag9}) and, paying attention to (\ref{fpag92}), we deduce easily the next result.

\begin{coro}\label{limit gamma i}
\begin{equation}\label{gamma_i}
\gamma_i=\left\{
           \begin{array}{ll}
            \displaystyle \frac{-j}{j+\alpha+1}, & \hbox{if $\quad i=0$;} \\
            \displaystyle (-1)^i\frac{\frac{i-j}{j+i+\alpha+1}-\sum_{k=0}^{i-1}\gamma_{k}\binom{i}{k}(-2)^k k!\frac{(\alpha+1)_i}{(\alpha+k+1)_i}} {\frac{2^i(\alpha+1)_i}{(\alpha+i+1)_i}}, & \hbox{if $\quad 1\leq i\leq j+1$.}
           \end{array}
         \right.
\end{equation}
\end{coro}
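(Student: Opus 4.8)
The plan is to derive the closed form in Corollary \ref{limit gamma i} directly from the recursion implicit in equation (\ref{fpag9}), by isolating the top coefficient $\gamma_{n,k}$ and passing to the limit. Concretely, for fixed $k$ with $0\leq k\leq j+1$, the term $i=k$ in the sum (\ref{fpag9}) contributes only through $l=0$ (the inner sum starts at $l=0$ and runs to $k-i=0$), giving the single summand $\gamma_{n,k}\binom{k}{k}(-1)^k k!\, 2^{0}\big(C_{n-k}^{(\alpha+k)}(1)\big)^{(k)}$, whereas for $i<k$ the summand involves $\gamma_{n,i}$ with a coefficient built from $\big(C_{n-i}^{(\alpha+i)}(1)\big)^{(k-l)}$, $0\le l\le k-i$. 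So I would solve (\ref{fpag9}) for $\gamma_{n,k}$, divide through by $\big(C_n^{(\alpha)}(1)\big)^{(k)}$, and use that $\gamma_{n,k}$ multiplied by a nonzero factor equals $\big(Q_n^{(\alpha,M,j)}(1)\big)^{(k)}/\big(C_n^{(\alpha)}(1)\big)^{(k)}$ minus the contributions of the lower-order $\gamma_{n,i}$.

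The key computational inputs are already in place: Proposition \ref{relasy1} gives $\lim_n \big(Q_n^{(\alpha,M,j)}(1)\big)^{(k)}/\big(C_n^{(\alpha)}(1)\big)^{(k)} = (k-j)/(j+k+\alpha+1)$, and the limit (\ref{fpag92}) tells us that only the $l=0$ portion of each inner sum survives asymptotically, contributing the factor $(\alpha+1)_k/(\alpha+i+1)_k$, while all $l\geq 1$ terms vanish. Thus in the limit the inner sum $\sum_{l=0}^{k-i}\frac{i!}{(i-l)!}2^{i-l}\big(C_{n-i}^{(\alpha+i)}(1)\big)^{(k-l)}/\big(C_n^{(\alpha)}(1)\big)^{(k)}$ reduces to $i!\,2^{i}\cdot(\alpha+1)_k/(\alpha+i+1)_k$ (the $l=0$ term), and (\ref{fpag9}) divided by $\big(C_n^{(\alpha)}(1)\big)^{(k)}$ becomes, in the limit,
\begin{equation*}
\frac{k-j}{j+k+\alpha+1}=\sum_{i=0}^{k}\gamma_i\binom{k}{i}(-1)^i i!\, 2^{i}\,i!\cdot\frac{1}{i!}\cdot\frac{(\alpha+1)_k}{(\alpha+i+1)_k}
=\sum_{i=0}^{k}\gamma_i\binom{k}{i}(-2)^i i!\,\frac{(\alpha+1)_k}{(\alpha+k... )},
\end{equation*}
after which one isolates the $i=k$ term (whose coefficient is $(-2)^k k!\cdot(\alpha+1)_k/(\alpha+k+1)_k$), renames the summation index consistently with the statement, and solves for $\gamma_k$. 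Setting $k=0$ recovers $\gamma_0=-j/(j+\alpha+1)$, matching the computation already displayed before the Corollary; this serves as the base case of the induction, and the general formula follows by strong induction on $i$ since the right-hand side of (\ref{gamma_i}) only involves $\gamma_0,\dots,\gamma_{i-1}$.

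I do not anticipate a serious obstacle here, since the result is essentially a matter of carefully tracking which terms in a finite sum survive the limit; the technique is that of \cite{dls2015,mavjs} as the authors note. The one place requiring care is the bookkeeping of the two nested sums and binomial/Pochhammer factors in (\ref{fpag9}): one must make sure the $l=0$ contribution is extracted correctly (it carries the factor $2^i i!$ from $\frac{i!}{(i-0)!}2^{i}$ times the outer $(-1)^i i!$, giving $(-2)^i (i!)^2/i!$-type expressions that must match the $(-2)^k k!$ in the stated formula), and that the vanishing of the $l\geq 1$ terms is justified uniformly via (\ref{fpag92}) and the convergence $\gamma_{n,i}\to\gamma_i$ from Proposition \ref{conv gamma i}. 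Since all sums are finite with a fixed number of terms ($i$ ranges over $0,\dots,j+1$), interchanging limit and sum is automatic, so no additional analytic justification is needed. I would present the argument as: (i) extract $\gamma_{n,k}$ from (\ref{fpag9}); (ii) divide by $\big(C_n^{(\alpha)}(1)\big)^{(k)}$ and take $n\to\infty$ using Propositions \ref{relasy1}, \ref{conv gamma i} and formula (\ref{fpag92}); (iii) solve the resulting triangular linear system recursively, obtaining (\ref{gamma_i}).
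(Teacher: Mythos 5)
Your proposal is correct and follows exactly the route the paper itself sketches: derive (\ref{fpag9}), divide by $\big(C_n^{(\alpha)}(1)\big)^{(k)}$, pass to the limit using Propositions \ref{relasy1} and \ref{conv gamma i} together with (\ref{fpag92}) (so that only the $l=0$ terms survive), and solve the resulting triangular system recursively starting from $\gamma_0=-j/(j+\alpha+1)$. Two bookkeeping remarks: the $i=k$ summand of (\ref{fpag9}) carries the factor $2^{k}$, not $2^{0}$ as written in your first paragraph; and the coefficient of $\gamma_k$ that you (correctly) obtain, namely $(-2)^k k!\,(\alpha+1)_k/(\alpha+k+1)_k$, indicates that the denominator of the displayed formula (\ref{gamma_i}) should carry an additional factor $i!$ --- invisible for $i\le 1$ since $i!=1$, but a typo in the statement for $i\ge 2$ rather than a flaw in your argument.
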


\medskip

Finally, we give an upper bound of the uniform norm of the Sobolev polynomials. This result will be useful to establish one of our main  target in Section 4.

\begin{theo}\label{cota Bn}
Let $\displaystyle Q_n^{(\alpha,M,j)}(x)$ be the orthogonal polynomials with respect to (\ref{pro}), then
\begin{equation*}
||Q_n^{(\alpha,M,j)}||_{\infty}:=\max_{x\in [-1,1]}\left|Q_n^{(\alpha,M,j)}(x)\right|\leq \left\{
                                                    \begin{array}{ll}
                                                      \frac{3j+2\alpha+2}{j+\alpha+1}+D, & \hbox{if $\quad\alpha\geq-1/2$;} \\
                                                      F\ n^{-\alpha-1/2}, & \hbox{if $\quad-1<\alpha<-1/2$,}
                                                    \end{array}
                                                  \right.
\end{equation*}
when $n\to+\infty,$ being $D$ and $F$ positive constants independent of $n$.
\end{theo}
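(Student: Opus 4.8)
The plan is to start from the reduced connection formula \eqref{RCF}, namely
$Q_n^{(\alpha,M,j)}(x)=\sum_{i=0}^{j+1}\gamma_{n,i}(1-x^2)^i\big(C_{n-i}^{(\alpha+i)}(x)\big)^{(i)}$ for $n\ge 2j+2$,
and bound the uniform norm term by term. Since Proposition \ref{conv gamma i} guarantees $\gamma_{n,i}\to\gamma_i\in\mathbb{R}$, the coefficients $\{\gamma_{n,i}\}_n$ are bounded for each $i$, so it suffices to control $\sup_{x\in[-1,1]}\big|(1-x^2)^i\big(C_{n-i}^{(\alpha+i)}(x)\big)^{(i)}\big|$. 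By the derivative formula \eqref{deri}, $\big(C_{n-i}^{(\alpha+i)}(x)\big)^{(i)}$ is a constant multiple of $C_{n-2i}^{(\alpha+2i)}(x)$, where the constant is $\dfrac{(-1)^i(n-i+2\alpha+2i+1)_i(-(n-i))_i}{2^i(\alpha+i+1)_i}$; by \eqref{pochamer}--\eqref{pochamer-1} this constant behaves like a nonzero multiple of $n^{2i}$. Hence the $i$-th term is, up to bounded factors, of size $n^{2i}\,\sup_{x\in[-1,1]}\big|(1-x^2)^i C_{n-2i}^{(\alpha+2i)}(x)\big|$, and everything reduces to a weighted sup-norm estimate for a single classical Gegenbauer polynomial.

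Next I would invoke the classical pointwise/weighted bounds for Gegenbauer (ultraspherical) polynomials in the normalization $C_n^{(\alpha)}(1)=1$. Two regimes appear, exactly matching the statement. For $\alpha\ge -1/2$ the polynomial $C_n^{(\alpha)}$ attains its maximum modulus on $[-1,1]$ at the endpoints, so $\|C_n^{(\alpha)}\|_\infty=1$; more precisely one has the bound $\big|(1-x^2)^{(2\alpha+1)/4}\,|x|^{?}\cdots\big|$ — but the clean fact I need is that $(1-x^2)^i|C_{n-2i}^{(\alpha+2i)}(x)|\le C\,n^{-2i}$ uniformly on $[-1,1]$, which follows from Szeg\H{o}'s estimate that near $x=\cos\theta$ one has $|C_m^{(\beta)}(\cos\theta)|=O\big((n\sin\theta)^{-\beta-1/2}\big)\cdot$(endpoint value) combined with $\sin^{2i}\theta=(1-x^2)^i$ absorbing precisely the singular factor when $\beta+1/2\le 2i$, i.e. when $\alpha+2i+1/2\le 2i$, which fails — so instead the correct mechanism is: $(1-x^2)^i(C_{n-2i}^{(\alpha+2i)}(x))$ equals $(C_{n-i}^{(\alpha+i)}(x))^{(i)}$ up to the $n^{2i}$ constant, and the function $(1-x^2)^i (C_m^{(\alpha+i)})^{(i)}$, being (again by \eqref{deri} read backwards) proportional to a derivative structure, is uniformly $O(1)$ in the $\alpha+i\ge$ threshold. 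To keep the argument honest I would instead bound directly: write $u_m(x):=(1-x^2)^{(\alpha+2i)/2+1/4}|C_m^{(\alpha+2i)}(x)|$; Szeg\H{o} gives $u_m(x)\le C\, m^{(\alpha+2i)/2-1/4}$ when $\alpha+2i\ge -1/2$ (endpoint-dominated) and the extra factor $(1-x^2)^{i-(\alpha+2i)/2-1/4}=(1-x^2)^{-\alpha/2-1/4}$ is bounded on $[-1,1]$ precisely when $-\alpha/2-1/4\ge 0$, i.e. $\alpha\le -1/2$; for $\alpha\ge-1/2$ one uses instead the endpoint value $C_m^{(\alpha+2i)}(1)=1$ together with monotonicity-type comparisons. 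The cleanest route for $\alpha\ge -1/2$: use that $\|C_n^{(\alpha)}\|_\infty=C_n^{(\alpha)}(1)=1$ and that $(1-x^2)^i|(C_{n-i}^{(\alpha+i)})^{(i)}(x)|\le$ (its value at a point where $1-x^2$ is small is killed, and where $1-x^2$ is bounded below one uses $|(C_m^{(\beta)})^{(i)}(x)|=O(m^{2i})$), giving the $i$-th term $O(1)$; summing over $i=0,\dots,j+1$ with the limiting coefficients $\gamma_i$ and extracting the $i=0$ contribution $|\gamma_{n,0}|\,|C_n^{(\alpha)}(x)|\to \frac{j}{j+\alpha+1}$ yields the explicit constant $\frac{3j+2\alpha+2}{j+\alpha+1}$ (this is $1+\frac{j}{j+\alpha+1}+\frac{j+\alpha+1}{j+\alpha+1}\cdot$(something) — I would pin the arithmetic down so the leading term plus the crude bound $1$ on $\|C_n^{(\alpha)}\|_\infty$ for the $i=0$ term gives exactly that fraction, absorbing the remaining finitely many bounded terms into $D$).

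For $-1<\alpha<-1/2$ the behaviour is different: $\|C_n^{(\alpha)}\|_\infty$ is no longer $1$ but grows, and the relevant Szeg\H{o} bound gives $|C_n^{(\alpha)}(x)|\le C\,n^{-\alpha-1/2}$ uniformly on $[-1,1]$ (the maximum is now attained in the bulk, not at $\pm1$, with this order). Term by term, $(1-x^2)^i|(C_{n-i}^{(\alpha+i)})^{(i)}(x)|$ is then $O\big(n^{2i}\cdot n^{-(\alpha+2i)-1/2}\big)=O(n^{-\alpha-1/2})$ when $\alpha+2i<-1/2$, and $O(1)$ (or smaller) once $\alpha+2i\ge -1/2$; since $-\alpha-1/2>0$, the dominant contribution across all $i$ is $O(n^{-\alpha-1/2})$, and multiplying by the bounded coefficients $\gamma_{n,i}$ and summing the $j+2$ terms preserves this order, giving $\|Q_n^{(\alpha,M,j)}\|_\infty\le F\,n^{-\alpha-1/2}$. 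The main obstacle is the case $\alpha\ge -1/2$: getting the \emph{explicit} constant $\frac{3j+2\alpha+2}{j+\alpha+1}$ rather than just ``$O(1)$'' requires isolating the $i=0$ term (whose sup-norm is exactly $|\gamma_{n,0}|\to \frac{j}{j+\alpha+1}$, using $\|C_n^{(\alpha)}\|_\infty=1$) and the contribution of the other terms at and near the endpoints, and checking that all the $i\ge 1$ terms contribute a quantity that, in the limit, is bounded by the stated fraction minus $\frac{j}{j+\alpha+1}=\frac{2j+2\alpha+2}{j+\alpha+1}$; the finitely many lower-order fluctuations (from $\gamma_{n,i}-\gamma_i$ and from subleading asymptotics in \eqref{deri}) are then swept into $D$. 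I would cite \cite[Thm. 7.4.1, 7.33.1]{sz} for the two Gegenbauer sup-norm regimes and keep the endpoint bookkeeping explicit only for $i=0$.
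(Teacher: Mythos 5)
Your case $-1<\alpha<-1/2$ is essentially the paper's own argument: start from (\ref{RCF}), use the boundedness of the $\gamma_{n,i}$ from Proposition \ref{conv gamma i}, reduce $\big(C_{n-i}^{(\alpha+i)}\big)^{(i)}$ to $\rho_{n,i}C_{n-2i}^{(\alpha+2i)}$ with $\rho_{n,i}\approx c_i\,n^{2i}$ via (\ref{deri}), and invoke the Jacobi sup-norm bounds (\ref{bound-jacobi}). You are in fact more careful than the paper here: you notice that for $i\ge 1$ one has $\alpha+2i\ge-1/2$, so (\ref{bound-jacobi}) alone gives endpoint growth $n^{\alpha+2i}$ and the factor $(1-x^2)^i$ must be exploited through the weighted Szeg\H{o} estimate; your observation that the leftover weight $(1-x^2)^{-\alpha/2-1/4}$ is bounded exactly when $\alpha\le-1/2$ is the correct mechanism and closes this half of the theorem.

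The genuine gap is in the case $\alpha\ge-1/2$, where you insist on using (\ref{RCF}) as well. Everything there hinges on the claim that $(1-x^2)^i\big|C_{n-2i}^{(\alpha+2i)}(x)\big|\le C\,n^{-2i}$ uniformly on $[-1,1]$, which you state and then try to justify in three successive ways, abandoning each mid-sentence; as your own computation shows, the leftover weight $(1-x^2)^{-\alpha/2-1/4}$ is now \emph{unbounded}, so the weighted Szeg\H{o} bound does not close the argument by itself (one would need a two-regime splitting at $1-x^2\sim n^{-2}$, which you never carry out). Moreover, even granting that estimate, your route produces only an unspecified $O(1)$ constant, and you concede you cannot reach $\frac{3j+2\alpha+2}{j+\alpha+1}$; the free additive $D$ makes the stated bound formally equivalent to $O(1)$, but your sketch for apportioning the constant among the $i\ge1$ terms is not an argument. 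The paper sidesteps all of this by using the \emph{other} connection formula, (\ref{GSeven})--(\ref{GSodd}) of Proposition \ref{firstconn}: for $\alpha\ge-1/2$ every summand of $\kappa_{2(n-1)}^{(j,0)}(1,x)$ has nonnegative coefficient and $\max_{x\in[-1,1]}\big|C_{2i}^{(\alpha)}(x)\big|=C_{2i}^{(\alpha)}(1)=1$, so $\max_{x\in[-1,1]}\big|\kappa_{2(n-1)}^{(j,0)}(1,x)\big|\le\kappa_{2(n-1)}^{(j,0)}(1,1)$, and the triangle inequality together with the $k=0$ limit computed in the proof of Proposition \ref{relasy1} yields $1+\frac{2j+\alpha+1}{j+\alpha+1}+D$ directly. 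You should switch to that route for $\alpha\ge-1/2$, or else supply a complete proof of the weighted bound $(1-x^2)^i\big|C_{n-2i}^{(\alpha+2i)}(x)\big|=O(n^{-2i})$.
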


\begin{proof}  Taking $\alpha=\lambda-1/2$ and considering the expression  (4.7.1) in \cite{sz},   we have
\begin{equation}\label{RGJ}
C_n^{(\alpha)}(x)=\frac{P_n^{(\lambda)}(x)}{P_n^{(\lambda)}(1)}=\frac{\Gamma(n+1)\Gamma(2\alpha+1)}{\Gamma(n+2\alpha+1)}P_n^{(\lambda)}(x)=
\frac{\Gamma(n+1)\Gamma(\alpha+1)}{\Gamma(n+\alpha+1)} P_n^{(\alpha, \alpha)}(x),
\end{equation}
where $P_n^{(\alpha, \beta)}$ are the classical Jacobi polynomials orthogonal with respect to the weight function $(1-x)^{\alpha}(1+x)^{\beta}, $
$\alpha, \beta>-1.$

Now, we use a uniform bound of $|P_n^{(\alpha,\alpha)}|$ given in  \cite[f. (22.14.1)]{abra}, i.e., for $-1\leq x\leq 1$

\begin{equation} \label{bound-jacobi} |P_n^{(\alpha,\alpha)}(x)| \leq \left\{
                                \begin{array}{ll}
                                  \displaystyle  P_n^{(\alpha,\alpha)}(1)=\binom{n+\alpha}{n}\approx n^{\alpha}, & \hbox{if $\quad \alpha\geq-1/2$;} \\
                                  \displaystyle |P_n^{(\alpha,\alpha)}(0)|\approx n^{-1/2}, & \hbox{if $\quad -1<\alpha<-1/2$.}
                                \end{array}
                              \right.
\end{equation}
To prove the result we use different approaches according to each case.
\begin{itemize}
  \item Case $\alpha\geq-1/2.$ From  (\ref{RGJ}) and (\ref{bound-jacobi}), it is clear that $\max_{x\in[-1,1]}|C_n^{(\alpha)}(x)|=1, $ and this maximum is reached at $x=1.$ We only prove the even case since the proof of the odd case is totally similar. First, we have
\begin{eqnarray*}
\max_{x\in[-1,1]}|\kappa_{2(n-1)}^{(j,0)}(1,x)|
&=&\max_{x\in[-1,1]}\left|\sum_{i=0}^{n-1}\frac{\left(C_{2i}^{(\alpha)}(1)\right)^{(j)}C_{2i}^{(\alpha)}(x)}
{||C_{2i}^{(\alpha)}||^2_{\alpha}}\right|\\
&\leq&\sum_{i=0}^{n-1}\frac{\left(C_{2i}^{(\alpha)}(1)\right)^{(j)}\max_{x\in[-1,1]}\left|C_{2i}^{(\alpha)}(x)\right|}
{||C_{2i}^{(\alpha)}||^2_{\alpha}} \\
&=&\sum_{i=0}^{n-1}\frac{\left(C_{2i}^{(\alpha)}(1)\right)^{(j)}C_{2i}^{(\alpha)}(1)}
{||C_{2i}^{(\alpha)}||^2_{\alpha}}=\kappa_{2(n-1)}^{(j,0)}(1,1).
\end{eqnarray*}
Therefore, using (\ref{GSeven}) and the previous bound we get,
\begin{eqnarray*}
\max_{x\in[-1,1]}|Q_{2n}^{(\alpha,M,j)}(x)|&\leq& \max_{x\in[-1,1]}\left|C_{2n}^{(\alpha)}(x)\right|+
\max_{x\in[-1,1]}\left|\frac{2M\big(C_{2n}^{(\alpha)}(1)\big)^{(j)}\kappa_{2(n-1)}^{(j,0)}(1,x)}{1+2M\kappa_{2(n-1)}^{(j,j)}(1,1)}\right|\\
&=&1+\frac{2M\big(C_{2n}^{(\alpha)}(1)\big)^{(j)}\kappa_{2(n-1)}^{(j,0)}(1,1)}{1+2M\kappa_{2(n-1)}^{(j,j)}(1,1)}.
\end{eqnarray*}
On the other hand, we can observe that in the proof of Proposition \ref{relasy1} it was established for $k=0$ that
$$
\lim_{n\to \infty} \frac{2M\big(C_{2n}^{(\alpha)}(1)\big)^{(j)}\kappa_{2(n-1)}^{(j,0)}(1,1)}{1+2M\kappa_{2(n-1)}^{(j,j)}(1,1)}=\frac{2j+\alpha+1}{j+\alpha+1}.
$$
Thus, we claim that for $n$ large enough there exists a positive constant $D$ such that
\begin{equation*}
\max_{x\in[-1,1]}|Q_{2n}^{(\alpha,M,j)}(x)|\leq  1+\frac{2j+\alpha+1}{j+\alpha+1}+D=\frac{3j+2\alpha+2}{j+\alpha+1}+D.
\end{equation*}
In fact, numerical experiments indicate that the sequence $\frac{2M\big(C_{2n}^{(\alpha)}(1)\big)^{(j)}\kappa_{2(n-1)}^{(j,0)}(1,1)}{1+2M\kappa_{2(n-1)}^{(j,j)}(1,1)}$ is decreasing, so $D$ cannot be removed.

\item Case $-1<\alpha<-1/2. $ For our purpose it is easier to take into account (\ref{RCF}). In this way, we get
\begin{eqnarray*} \label{cotas-sb}
\max_{x\in[-1,1]}\left|Q_n^{(\alpha,M,j)}(x)\right|
&\leq&\sum_{i=0}^{j+1} \max_{x\in[-1,1]}\left|\gamma_{n,i}(1-x^2)^i\big(C_{n-i}^{(\alpha+i)}(x)\big)^{(i)}\right| \nonumber\\
&\leq& (j+2)\max_{i\in \{0,\ldots, j+1\}} \max_{x\in[-1,1]}\left|\gamma_{n,i}(1-x^2)^i\big(C_{n-i}^{(\alpha+i)}(x)\big)^{(i)}\right|.
\end{eqnarray*}
We are going to compute $\max_{x\in [-1,1]}\left|\gamma_{n,i}(1-x^2)^i\big(C_{n-i}^{(\alpha+i)}(x)\big)^{(i)}\right|.$ First, we observe that using (\ref{deri}) we get
\begin{eqnarray*}\label{RCF2}
(1-x^2)^i\big(C_{n-i}^{(\alpha+i)}(x)\big)^{(i)}&=&(1-x^2)^i\rho_{n,i}C_{n-2i}^{(\alpha+2i)}(x) \nonumber\\ \nonumber
&=&(1-x^2)^i\rho_{n,i}\frac{\Gamma(n-2i+1)\Gamma(\alpha+2i+1)}{\Gamma(n+\alpha+1)}P_{n-2i}^{(\alpha+2i,\alpha+2i)}(x),
\end{eqnarray*}
where \begin{equation} \label{ros}\rho_{n,i}=\frac{(-1)^i(n+2\alpha+i+1)_i(-n+i)_i}{2^i(\alpha+i+1)_i}\approx \frac{1}{2^i(\alpha+i+1)_i}  n^{2i}.\end{equation}

Using (\ref{ros}), (\ref{bound-jacobi}) and Proposition \ref{conv gamma i},  we get for $n$ large enough,

\begin{eqnarray*}
&&\max_{x\in[-1,1]}\left|\gamma_{n,i} (1-x^2)^i\rho_{n,i}\frac{\Gamma(n-2i+1)\Gamma(\alpha+2i+1)}{\Gamma(n+\alpha+1)}P_{n-2i}^{(\alpha+2i,\alpha+2i)}(x)\right|\\
&&\le C_2\ n^{2i}n^{-\alpha-2i}n^{-1/2}=C_2\ n^{-\alpha-1/2},
\end{eqnarray*}
which proves the result for this case.
\end{itemize}

\end{proof}

%\quad n\rightarrow+\infty \\
%&=&1+\frac{2j+\alpha+1}{j+\alpha+1}+D\\
%&=&\frac{3j+2\alpha+2}{j+\alpha+1}+D.

\section{Asymptotics behavior of the eigenvalues of Gegenbauer-Sobolev orthogonal polynomials}

In \cite{Bavinck1999} the authors claim that there exits a linear differential operator of the form $\mathbf{T}=\mathbf{L}+M\mathbf{A}$ for discrete Sobolev orthogonal polynomials with respect to an inner product such as
\begin{equation*}
(f,g)=\int_I f(x)g(x)d\mu+M\big[f^{(j)}(-c)g^{(j)}(-c)+f^{(j)}(c)g^{(j)}(c)\big], \quad c>0,
\end{equation*}
where $\mu$ is a finite symmetric Borel measure supported on the interval $I$.  $\mathbf{L}$ is the linear differential operator associated with the standard polynomials orthogonal with respect to $\mu.$ This operator $\mathbf{L}+M\mathbf{A}$ can have infinite order. Obviously, the inner product (\ref{pro}) here considered lies in this framework.

In addition, the authors give expressions for the eigenvalues associated with  $\mathbf{L}+M\mathbf{A}.$ Then, if we particularize this for the Gegenbauer--Sobolev orthogonal polynomials, we have
$$(\mathbf{L}+M\mathbf{A})Q_n^{(\alpha,M,j)}(x)=\widetilde{\lambda}_nQ_n^{(\alpha,M,j)}(x).$$
We are looking for the asymptotic behavior of $\widetilde{\lambda}_n$ which is the key to establish one of our main goals in this work.

Following \cite{Bavinck1999}, we get that $\widetilde{\lambda}_n=\lambda_n+M\mu_n$, where the numbers $\{\mu_m\}_{m=0}^{j+1}$ can be chosen arbitrarily and $\{\mu_m\}_{m=j+2}^\infty$ and the operator $\mathbf{A}$ are uniquely determined once  the choice of these arbitrary numbers has been done. In fact, they establish
\begin{eqnarray*}
\mu_{j+2t}&=&\mu_j+\sum_{i=1}^t(\lambda_{j+2i}-\lambda_{j+2i-2})q_{j+2i,j+2i},\\
\mu_{j+2t+1}&=&\mu_{j+1}+\sum_{i=1}^t(\lambda_{j+2i+1}-\lambda_{j+2i-1})q_{j+2i+1,j+2i+1},
\end{eqnarray*}
where $t$ is a positive integer and
 $$ q_{n,n}=K_{n-1}^{(j,j)}(1,1)+(-1)^{n+j}K_{n-1}^{(j,j)}(1,-1).$$
 Since $\{\mu_m\}_{m=0}^{j+1}$ can be chosen arbitrarily, for simplicity we take $\mu_0=\dots=\mu_{j+1}=0.$ Using (\ref{EiV}), we obtain

\begin{eqnarray} \label{mu-bav} \mu_{j+2t}&=&2\sum_{i=1}^t(2j+4i+2\alpha-1)q_{j+2i,j+2i},\\
\mu_{j+2t+1}&=&2\sum_{i=1}^t(2j+4i+2\alpha+1)q_{j+2i+1,j+2i+1}. \label{mu-bav-2}
\end{eqnarray}
 We are going to establish the asymptotic behavior of the sequence $\{\mu_n\}_{n}$ given by (\ref{mu-bav})--(\ref{mu-bav-2}) when $n\to \infty.$ First, we need a technical result.
\begin{prop}
We have,
\begin{equation}\label{qnn even}
q_{s+2i,s+2i}= \left\{
                  \begin{array}{ll}
                    2\kappa_{s+2i-2}^{(j,j)}(1,1), & \hbox{if $s$ is even;} \\ \\
                    2\widetilde{\kappa}_{s+2i-3}^{(j,j)}(1,1), & \hbox{if $s$ is odd,}
                  \end{array}
                \right.
\end{equation}
where $\kappa_{2m}^{(j,k)}(x,y)$ and $\widetilde{\kappa}_{2m}^{(j,k)}(x,y)$ are given in (\ref{evenkernels}) and (\ref{oddkernels}), respectively.
\end{prop}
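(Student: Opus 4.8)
The plan is to start from the defining expression $q_{n,n}=K_{n-1}^{(j,j)}(1,1)+(-1)^{n+j}K_{n-1}^{(j,j)}(1,-1)$ and use the parity of the Gegenbauer polynomials to collapse the two kernels into a single one. Since $C_m^{(\alpha)}(-x)=(-1)^mC_m^{(\alpha)}(x)$, differentiating $j$ times and evaluating at $x=1$ yields $\big(C_m^{(\alpha)}(-1)\big)^{(j)}=(-1)^{m+j}\big(C_m^{(\alpha)}(1)\big)^{(j)}$. Substituting this into
\[K_{n-1}^{(j,j)}(1,-1)=\sum_{m=0}^{n-1}\frac{\big(C_m^{(\alpha)}(1)\big)^{(j)}\big(C_m^{(\alpha)}(-1)\big)^{(j)}}{||C_m^{(\alpha)}||^2_{\alpha}}\]
turns it into $\sum_{m=0}^{n-1}(-1)^{m+j}\frac{\left(\big(C_m^{(\alpha)}(1)\big)^{(j)}\right)^2}{||C_m^{(\alpha)}||^2_{\alpha}}$.

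Next I would insert both expressions into $q_{n,n}$ and factor termwise, getting
\[q_{n,n}=\sum_{m=0}^{n-1}\Big(1+(-1)^{n+j}(-1)^{m+j}\Big)\frac{\left(\big(C_m^{(\alpha)}(1)\big)^{(j)}\right)^2}{||C_m^{(\alpha)}||^2_{\alpha}}=\sum_{m=0}^{n-1}\big(1+(-1)^{n+m}\big)\frac{\left(\big(C_m^{(\alpha)}(1)\big)^{(j)}\right)^2}{||C_m^{(\alpha)}||^2_{\alpha}}.\]
The factor $1+(-1)^{n+m}$ equals $2$ when $m$ and $n$ have the same parity and vanishes otherwise, so only the indices $m\in\{0,\dots,n-1\}$ with $m\equiv n\pmod 2$ contribute, each with weight $2$.

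Finally I would write $n=s+2i$, so that the parity of $n$ is that of $s$, and split into cases. If $s$ is even, the surviving indices are the even integers $0,2,\dots,s+2i-2$; putting $m=2l$ and comparing with the definition (\ref{evenkernels}) gives $q_{s+2i,s+2i}=2\kappa_{s+2i-2}^{(j,j)}(1,1)$. If $s$ is odd, the surviving indices are the odd integers $1,3,\dots,s+2i-2$; putting $m=2l+1$ and comparing with (\ref{oddkernels}) gives $q_{s+2i,s+2i}=2\widetilde{\kappa}_{s+2i-3}^{(j,j)}(1,1)$. This establishes (\ref{qnn even}). There is no analytic obstacle here; the only point that needs care is the index bookkeeping in this last step, namely checking that the top index of the even (resp. odd) kernel, $s+2i-2$ (resp. $s+2i-3$), is indeed the correct even number matching the largest admissible $m$.
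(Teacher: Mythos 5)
Your argument is correct and coincides with the paper's own proof: both substitute the parity relation $\big(C_m^{(\alpha)}(-1)\big)^{(j)}=(-1)^{m+j}\big(C_m^{(\alpha)}(1)\big)^{(j)}$ into the definition of $q_{n,n}$, combine the two kernel sums into one with the factor $1+(-1)^{n+m}$, and then identify the surviving even (resp.\ odd) indices with $\kappa_{s+2i-2}^{(j,j)}(1,1)$ (resp.\ $\widetilde{\kappa}_{s+2i-3}^{(j,j)}(1,1)$). The index bookkeeping you flag at the end is also handled correctly.
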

\begin{proof} We use the definition of $q_{s,s}.$
\begin{eqnarray*}
q_{s+2i,s+2i}&=&K_{s+2i-1}^{(j,j)}(1,1)+(-1)^{s+j}K_{s+2i-1}^{(j,j)}(1,-1)\\
&=&\sum_{m=0}^{s+2i-1}\frac{\left(\big(C_m^{(\alpha)}(1)\big)^{(j)}\right)^2}{||C_m^{(\alpha)}||^2_{\alpha}}
+(-1)^{s+j}\sum_{m=0}^{s+2i-1}\frac{\big(C_m^{(\alpha)}(1)\big)^{(j)}\big(C_m^{(\alpha)}(-1)\big)^{(j)}}{||C_m^{(\alpha)}||^2_{\alpha}}\\
&=&\sum_{m=0}^{s+2i-1}\frac{\left(\big(C_m^{(\alpha)}(1)\big)^{(j)}\right)^2\left(1+(-1)^{s+m+2j}\right)}{||C_m^{(\alpha)}||^2_{\alpha}}\\
\end{eqnarray*}
Then, if $s$ is even we get
$$
q_{s+2i,s+2i}=2\sum_{m=0, \, m \, even}^{s+2i-1}\frac{\left(\big(C_m^{(\alpha)}(1)\big)^{(j)}\right)^2}{||C_m^{(\alpha)}||^2_{\alpha}}= 2\kappa_{s+2i-2}^{(j,j)}(1,1).
$$
The odd case is established in the same way.
\end{proof}

\begin{prop} \label{ABmu}
It holds
\begin{equation*}
\lim_{n\to+\infty}\frac{\mu_{2n}}{n^{4j+2\alpha+4}}=\lim_{n\to+\infty}\frac{\mu_{2n+1}}{n^{4j+2\alpha+4}}
=\frac{2^{2j+3}}{(2j+\alpha+2)(2j+\alpha+1)\Gamma^2(\alpha+j+1)}.
\end{equation*}
\end{prop}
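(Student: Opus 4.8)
The plan is to substitute the closed forms (\ref{mu-bav})--(\ref{mu-bav-2}) into Stolz's criterion, exactly as in the proof of Proposition \ref{kernel(k,s)(1,1)}, and to control the resulting increments via the kernel asymptotics (\ref{kappa(k,s)(1,1)}). I would carry out the even-index case for even $j$ in full; the other three sub-cases (parity of $j$ against parity of the index of $\mu$) are handled in the same way, the only change being that $\kappa$ is replaced by $\widetilde{\kappa}$ in (\ref{qnn even}) when $j$ is odd, and both kernels obey the same limit (\ref{kappa(k,s)(1,1)}).

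\emph{Reduction to the increment.} From (\ref{mu-bav}) one has $\mu_{j+2t}-\mu_{j+2(t-1)}=2(2j+4t+2\alpha-1)\,q_{j+2t,j+2t}$, while $t^{4j+2\alpha+4}-(t-1)^{4j+2\alpha+4}\approx(4j+2\alpha+4)\,t^{4j+2\alpha+3}$, so Stolz's criterion gives
$$\lim_{t\to+\infty}\frac{\mu_{j+2t}}{t^{4j+2\alpha+4}}=\lim_{t\to+\infty}\frac{2(2j+4t+2\alpha-1)\,q_{j+2t,j+2t}}{(4j+2\alpha+4)\,t^{4j+2\alpha+3}}.$$
By (\ref{qnn even}) with $s=j$ even, $q_{j+2t,j+2t}=2\kappa_{j+2t-2}^{(j,j)}(1,1)$; writing $j+2t-2=2(n-1)$ with $n=j/2+t$ and using (\ref{kappa(k,s)(1,1)}) with $k=s=j$ yields $\kappa_{j+2t-2}^{(j,j)}(1,1)\approx 2^{2j}C_{j,j}\,t^{4j+2\alpha+2}$, hence $q_{j+2t,j+2t}\approx 2^{2j+1}C_{j,j}\,t^{4j+2\alpha+2}$ with $C_{j,j}=\frac{1}{(2j+\alpha+1)\Gamma^2(\alpha+j+1)}$.

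\emph{Assembling the limit.} Substituting this together with $2j+4t+2\alpha-1\approx 4t$ into the displayed limit gives
$$\lim_{t\to+\infty}\frac{\mu_{j+2t}}{t^{4j+2\alpha+4}}=\frac{2^{2j+4}C_{j,j}}{4j+2\alpha+4}=\frac{2^{2j+3}}{(2j+\alpha+2)(2j+\alpha+1)\Gamma^2(\alpha+j+1)}.$$
The same argument applied to (\ref{mu-bav-2}), now with $q_{j+2t+1,j+2t+1}=2\widetilde{\kappa}_{j+2t-2}^{(j,j)}(1,1)$, the second equality of (\ref{kappa(k,s)(1,1)}), and the factor $2j+4t+2\alpha+1\approx 4t$, produces the identical limit for $\mu_{j+2t+1}/t^{4j+2\alpha+4}$. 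Finally, since $j$ is fixed, $\mu_{2n}$ and $\mu_{2n+1}$ coincide with $\mu_{j+2t}$ or $\mu_{j+2t+1}$ for $t=n+O(1)$, so $(t/n)^{4j+2\alpha+4}\to 1$ and both $\lim_n\mu_{2n}/n^{4j+2\alpha+4}$ and $\lim_n\mu_{2n+1}/n^{4j+2\alpha+4}$ equal the common value above; swapping $\kappa$ for $\widetilde{\kappa}$ when $j$ is odd is immaterial by (\ref{kappa(k,s)(1,1)}).

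The argument is essentially a bookkeeping exercise once (\ref{kappa(k,s)(1,1)}) is available; the only point requiring care — and hence the main (mild) obstacle — is aligning the kernel subscript $j+2t-2$ coming from (\ref{qnn even}) with the normalization $\kappa_{2(n-1)}^{(j,j)}$ of Proposition \ref{kernel(k,s)(1,1)}, and verifying that all four sub-cases yield the same leading coefficient.
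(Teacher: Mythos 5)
Your proposal is correct and follows essentially the same route as the paper: Stolz's criterion applied to the sums (\ref{mu-bav})--(\ref{mu-bav-2}), with the increment $q_{\cdot,\cdot}$ rewritten via (\ref{qnn even}) and estimated by the kernel asymptotics (\ref{kappa(k,s)(1,1)}). Your explicit tracking of the four parity sub-cases and of the index shift $j+2t-2=2(n-1)$ is in fact slightly more careful than the paper's own write-up.
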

\begin{proof} For $n$ large enough we can write  $2n=2m+j,$ and so $j$ is even.  To establish this result we are going to use the Stolz's criterium and formulae (\ref{kappa(k,s)(1,1)}) and (\ref{qnn even}).
\begin{eqnarray*}
&&\lim_{n\to+\infty}\frac{\mu_{2n}}{n^{4j+2\alpha+4}}=\lim_{m\to+\infty}\frac{\mu_{j+2m}}{m^{4j+2\alpha+4}}\\
&&=\lim_{m\to+\infty}\frac{2\sum_{i=1}^{m}(2j+4i+2\alpha-1)q_{j+2i,j+2i}-2\sum_{i=1}^{m-1}(2j+4i+2\alpha-1)q_{j+2i,  j+2i}}{m^{4j+2\alpha+4}-(m-1)^{4j+2\alpha+4}}\\
&&=\frac{1}{(2j+\alpha+2)}\lim_{m\to+\infty}\frac{(2j+4m+2\alpha-1)q_{j+2m, j+2m}}{m^{4j+2\alpha+3}}\\
&&=\frac{1}{2j+\alpha+2}\lim_{m\to+\infty}\frac{(2j+4m+2\alpha+1)}{m}
\frac{2\kappa_{2(m-1+j/2)}^{(j,j)}(1,1)}{m^{4j+2\alpha+2}}\\
&&=\frac{2^{2j+3}}{(2j+\alpha+2)(2j+\alpha+1)\Gamma^2(\alpha+j+1)}.
\end{eqnarray*}
Analogously, for $n$ large enough $2n+1=2m+j+1, $ so $j+1$ is odd. Then, to prove the other limit we can use (\ref{qnn even}) with $s=j+1.$
\end{proof}

Finally, we are ready to establish the asymptotic behavior of the eigenvalues $\widetilde{\lambda}_n. $

\begin{prop} \label{ABeiv}
Let $\widetilde{\lambda}_n$ be the eigenvalues associated with the linear differential operator  $\mathbf{T}=\mathbf{L}+M\mathbf{A}.$ Then,
\begin{equation*}
\lim_{n\to+\infty}\frac{\widetilde{\lambda}_n}{n^{4j+2\alpha+4}}=\frac{M}{2^{2j+2\alpha+1}(2j+\alpha+2)(2j+\alpha+1)\Gamma^2(\alpha+j+1)}.
\end{equation*}
\end{prop}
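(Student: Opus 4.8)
The plan is to reduce the statement directly to Proposition~\ref{ABmu}. Recall from Bavinck's construction that $\widetilde{\lambda}_n=\lambda_n+M\mu_n$, where $\lambda_n=n(n+2\alpha+1)$ is the Gegenbauer eigenvalue given in (\ref{EiV}). Hence
\[
\frac{\widetilde{\lambda}_n}{n^{4j+2\alpha+4}}=\frac{\lambda_n}{n^{4j+2\alpha+4}}+M\,\frac{\mu_n}{n^{4j+2\alpha+4}},
\]
and it suffices to control the two summands separately.

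First I would dispose of the classical eigenvalue term. Since $\alpha>-1$ and $j\in\mathbb{N}\cup\{0\}$, we have $2j+\alpha+1>0$, so $4j+2\alpha+4=2+2(2j+\alpha+1)>2$; because $\lambda_n=O(n^2)$, this forces $\lambda_n/n^{4j+2\alpha+4}\to 0$. Thus only the $\mu_n$ contribution survives, and the whole problem is to rewrite Proposition~\ref{ABmu} (where the running index appears as $n$ inside $\mu_{2n}$ and $\mu_{2n+1}$) in the form where the subscript sits in the denominator as well.

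For that, I would treat even and odd subscripts separately. Writing $N=2n$ gives $N^{4j+2\alpha+4}=2^{4j+2\alpha+4}\,n^{4j+2\alpha+4}$, so by Proposition~\ref{ABmu},
\[
\lim_{N\to\infty}\frac{\mu_N}{N^{4j+2\alpha+4}}=\frac{1}{2^{4j+2\alpha+4}}\lim_{n\to\infty}\frac{\mu_{2n}}{n^{4j+2\alpha+4}}=\frac{2^{2j+3}}{2^{4j+2\alpha+4}\,(2j+\alpha+2)(2j+\alpha+1)\Gamma^2(\alpha+j+1)},
\]
which simplifies, using $2^{2j+3-4j-2\alpha-4}=2^{-2j-2\alpha-1}$, to $\dfrac{1}{2^{2j+2\alpha+1}(2j+\alpha+2)(2j+\alpha+1)\Gamma^2(\alpha+j+1)}$. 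For odd subscripts $N=2n+1$ one uses $(2n+1)^{4j+2\alpha+4}\approx 2^{4j+2\alpha+4}n^{4j+2\alpha+4}$ together with the second limit in Proposition~\ref{ABmu} to obtain the same value; since the even and odd subsequences have a common limit, $\mu_N/N^{4j+2\alpha+4}$ converges to that value as well.

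Combining the two steps and multiplying by $M$ gives exactly
\[
\lim_{n\to+\infty}\frac{\widetilde{\lambda}_n}{n^{4j+2\alpha+4}}=\frac{M}{2^{2j+2\alpha+1}(2j+\alpha+2)(2j+\alpha+1)\Gamma^2(\alpha+j+1)}.
\]
There is no genuine obstacle here: the only points requiring care are the bookkeeping of the powers of $2$ produced when rescaling the index parity, and checking that the $\lambda_n$ term is of strictly lower order — which is precisely where the inequality $2j+\alpha+1>0$, guaranteed by the hypotheses $\alpha>-1$ and $j\ge 0$, is used.
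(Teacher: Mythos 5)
Your proof is correct and follows essentially the same route as the paper: split $\widetilde{\lambda}_n=\lambda_n+M\mu_n$, note that $\lambda_n=O(n^2)$ is of lower order since $4j+2\alpha+4>2$, and rescale the index by a factor of $2$ to invoke Proposition~\ref{ABmu}, with the powers of $2$ combining as $2^{2j+3}/2^{4j+2\alpha+4}=2^{-2j-2\alpha-1}$. Your explicit treatment of the even and odd subsequences merely spells out what the paper's one-line computation leaves implicit.
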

\begin{proof} Applying (\ref{EiV}) and Proposition \ref{ABmu}
\begin{eqnarray*}
\lim_{n\to+\infty}\frac{\widetilde{\lambda}_n}{n^{4j+2\alpha+4}}&=&\lim_{n\to+\infty}\frac{\lambda_n+M\mu_n}{n^{4j+2\alpha+4}}=
\lim_{n\to+\infty}\frac{M\mu_n}{(n/2)^{4j+2\alpha+4}\ 2^{4j+2\alpha+4}}\\
&=&\frac{M}{(2j+\alpha+2)(2j+\alpha+1)\Gamma^2(\alpha+j+1)2^{2j+2\alpha+1}}.
\end{eqnarray*}
\end{proof}

To conclude this section, we establish one of the main goals of this paper.

\begin{theo}\label{main1}
Let $\widetilde{Q}_n^{(\alpha,M,j)}(x)$ be orthonormal polynomials with respect to (\ref{pro}), and $\widetilde{\lambda}_n$ the eigenvalues associated with the linear differential operator  $\mathbf{T}=\mathbf{L}+M\mathbf{A}.$ Then,
\begin{itemize}
  \item if $\alpha\geq-1/2,$
  $$\lim_{n\to+\infty}\frac{\log( \max_{x\in [-1,1]}|\widetilde{Q}_n^{(\alpha,M,j)}(x)|)}{\log(\widetilde{\lambda}_n)}=\frac{\alpha+1/2}{4j+2\alpha+4},$$
  \item if $-1<\alpha<-1/2,$
  $$\lim_{n\to+\infty}\frac{\log( \max_{x\in [-1,1]}|\widetilde{Q}_n^{(\alpha,M,j)}(x)|)}{\log(\widetilde{\lambda}_n)}=0.$$
\end{itemize}
\end{theo}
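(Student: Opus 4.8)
The plan is to combine the three asymptotic ingredients already assembled in the paper: the uniform bound on $\|Q_n^{(\alpha,M,j)}\|_\infty$ from Theorem \ref{cota Bn}, the comparison of Sobolev and Gegenbauer norms from Proposition \ref{p-ABnorm2n}, and the growth rate of the eigenvalues from Proposition \ref{ABeiv}. First I would write
\[
\max_{x\in[-1,1]}|\widetilde Q_n^{(\alpha,M,j)}(x)| = \frac{\|Q_n^{(\alpha,M,j)}\|_\infty}{\|Q_n^{(\alpha,M,j)}\|_S},
\]
so that $\log$ of the left-hand side equals $\log\|Q_n^{(\alpha,M,j)}\|_\infty - \log\|Q_n^{(\alpha,M,j)}\|_S$. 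By Proposition \ref{p-ABnorm2n}, $\|Q_n^{(\alpha,M,j)}\|_S \approx \|C_n^{(\alpha)}\|_\alpha$, and by (\ref{asNORM}) we have $\|C_n^{(\alpha)}\|_\alpha^2 \approx 2^{2\alpha}\Gamma^2(\alpha+1)\,n^{-2\alpha-1}$, hence $\log\|Q_n^{(\alpha,M,j)}\|_S = -\tfrac{2\alpha+1}{2}\log n + O(1)$.

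Next I would handle the numerator by cases. If $\alpha\ge -1/2$, Theorem \ref{cota Bn} gives $\|Q_n^{(\alpha,M,j)}\|_\infty = O(1)$, and in fact it is bounded below by a positive constant (e.g. since $|Q_n^{(\alpha,M,j)}(1)|\to |{-j}/(j+\alpha+1)|$, or simply since $\|Q_n\|_\infty \ge \|Q_n\|_S/\sqrt{2^{2\alpha+1}\Gamma^2(\alpha+1)/(2\alpha+1)}$ is bounded away from $0$), so $\log\|Q_n^{(\alpha,M,j)}\|_\infty = O(1)$. Therefore the numerator of the ratio is $\tfrac{2\alpha+1}{2}\log n + O(1)$. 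If $-1<\alpha<-1/2$, Theorem \ref{cota Bn} gives $\|Q_n^{(\alpha,M,j)}\|_\infty = O(n^{-\alpha-1/2})$, so $\log\|Q_n^{(\alpha,M,j)}\|_\infty \le (-\alpha-1/2)\log n + O(1)$; combined with the norm term, the numerator is $\le (-\alpha-1/2)\log n + \tfrac{2\alpha+1}{2}\log n + O(1) = O(1)$, and it is also bounded below by $0$ minus an $O(1)$ term by the same lower bound on $\|Q_n\|_\infty$, so the whole numerator is $O(1)$.

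For the denominator, Proposition \ref{ABeiv} gives $\widetilde\lambda_n \approx c\, n^{4j+2\alpha+4}$ with $c>0$ (here one uses $4j+2\alpha+4>0$, which holds since $\alpha>-1$ and $j\ge 0$, so indeed $\widetilde\lambda_n\to\infty$), hence $\log\widetilde\lambda_n = (4j+2\alpha+4)\log n + O(1)$. Dividing, in the case $\alpha\ge-1/2$ the ratio tends to $\tfrac{(2\alpha+1)/2}{4j+2\alpha+4} = \tfrac{\alpha+1/2}{4j+2\alpha+4}$, and in the case $-1<\alpha<-1/2$ it tends to $0$. The only mildly delicate point is justifying that $\|Q_n^{(\alpha,M,j)}\|_\infty$ is bounded \emph{below} (away from zero in the first case, or at least that its log is not more negative than $O(\log n)$ in the second), which I would get cheaply from the equivalence $\|Q_n\|_\infty \ge \|Q_n\|_{L^2((1-x^2)^\alpha)}/\|(1-x^2)^{\alpha/2}\|_{L^1}^{1/2}$ together with $\|Q_n\|_S \ge \|Q_n\|_{L^2((1-x^2)^\alpha)}$ and the known size of $\|Q_n\|_S\approx\|C_n^{(\alpha)}\|_\alpha$; this is routine and I would state it in a line rather than belabor it. No genuinely hard step remains — the theorem is essentially a bookkeeping corollary of the preceding propositions once the logarithms are expanded.
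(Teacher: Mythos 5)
Your overall strategy is exactly the paper's: pass from $\widetilde{Q}_n^{(\alpha,M,j)}$ to $Q_n^{(\alpha,M,j)}/\|Q_n^{(\alpha,M,j)}\|_S$, expand the logarithms, and feed in Theorem \ref{cota Bn}, Proposition \ref{p-ABnorm2n} together with (\ref{asNORM}), and Proposition \ref{ABeiv}. The case $-1<\alpha<-1/2$ and all the upper estimates are fine. The problem is the lower bound on $\|Q_n^{(\alpha,M,j)}\|_{\infty}$ in the case $\alpha>-1/2$, which you rightly single out as the delicate point but then justify with an argument that does not work. Your ``cheap'' route yields at best $\|Q_n^{(\alpha,M,j)}\|_{\infty}\geq \|Q_n^{(\alpha,M,j)}\|_{\alpha}/c\geq \|C_n^{(\alpha)}\|_{\alpha}/c$ (the last inequality by the extremal property, since $Q_n^{(\alpha,M,j)}$ and $C_n^{(\alpha)}$ share the leading coefficient; note that the inequality $\|Q_n\|_{L^2}\leq\|Q_n\|_S$ you invoke points the wrong way for your chain). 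By (\ref{asNORM}) this quantity behaves like $n^{-\alpha-1/2}$, which tends to $0$ when $\alpha>-1/2$: it is not bounded away from zero, contrary to your parenthetical claim that $\|Q_n\|_S$ divided by a constant is. With only $\log\|Q_n^{(\alpha,M,j)}\|_{\infty}\geq-(\alpha+1/2)\log n+O(1)$ available, the numerator is merely sandwiched between $O(1)$ and $(\alpha+1/2)\log n+O(1)$, and you obtain $0\leq\liminf\leq\limsup\leq\frac{\alpha+1/2}{4j+2\alpha+4}$ rather than the stated limit. Your other suggestion, $|Q_n^{(\alpha,M,j)}(1)|\to j/(j+\alpha+1)$, does give a constant lower bound, but only for $j\geq1$; for $j=0$ that limit is $0$ and the argument collapses.

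A patch valid for every $j\geq0$: by Proposition \ref{relasy1} with $k=j+1$ and (\ref{cnk(1)}), $\big(Q_n^{(\alpha,M,j)}(1)\big)^{(j+1)}\approx c\,n^{2(j+1)}$ with $c\neq0$, while Markov's inequality gives $\|\big(Q_n^{(\alpha,M,j)}\big)^{(j+1)}\|_{\infty}\leq C n^{2(j+1)}\|Q_n^{(\alpha,M,j)}\|_{\infty}$; hence $\liminf_n\|Q_n^{(\alpha,M,j)}\|_{\infty}\geq|c|/C>0$. (Alternatively, the Mehler--Heine limit of Theorem \ref{amh GS} evaluated at a real point where $\varphi_{\alpha,j}$ does not vanish gives the same conclusion.) To be fair, the paper's own ``Taking limits, we obtain'' glosses over exactly this point, but a complete proof needs it; your write-up, by asserting the false boundedness of $\|Q_n\|_S$ away from zero, makes the gap concrete rather than merely implicit.
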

\begin{proof} We are going to use Theorem \ref{cota Bn}, Proposition \ref{p-ABnorm2n}, Proposition \ref{ABeiv}, and (\ref{asNORM}).
\begin{itemize}
  \item Case $\alpha\geq-1/2.$ We have
\begin{eqnarray*}
&&\frac{\log(\max_{x\in[-1,1]}|\widetilde{Q}_n^{(\alpha,M,j)}(x)|)}{\log(\widetilde{\lambda}_n)}= \frac{\log(\max_{x\in[-1,1]}|Q_n^{(\alpha,M,j)}(x)|/||Q_n^{(\alpha,M,j)}||_S)}{\log(\widetilde{\lambda}_n)}\\
&&=\frac{\log\left(\max_{x\in[-1,1]}|Q_n^{(\alpha,M,j)}(x)|/\big(||Q_n^{(\alpha,M,j)}||_S\, n^{\alpha+1/2}n^{-\alpha-1/2}\big)\right)}
{\log\left(\frac{\widetilde{\lambda}_n}{n^{4j+2\alpha+4}}n^{4j+2\alpha+4}\right)}\\
&&=\frac{\log(\max_{x\in[-1,1]}|Q_n^{(\alpha,M,j)}(x)|)-\log\left(||Q_n^{(\alpha,M,j)}||_S\,  n^{\alpha+1/2}\right)-(-\alpha-1/2)\log(n)}
{(4j+2\alpha+4)\log(n) +\log\left(\frac{\widetilde{\lambda}_n}{n^{4j+2\alpha+4}}\right)}
\end{eqnarray*}
Taking limits, we obtain
 $$\lim_{n\to+\infty}\frac{\log( \max_{x\in [-1,1]}|\widetilde{Q}_n^{(\alpha,M,j)}(x)|)}{\log(\widetilde{\lambda}_n)}=\frac{\alpha+1/2}{4j+2\alpha+4}.$$

  \item Case $-1<\alpha<-1/2.$ If we proceed like in the above case,  then we get
  \begin{eqnarray*}
&&\frac{\log(\max_{x\in[-1,1]}|\widetilde{Q}_n^{(\alpha,M,j)}(x)|)}{\log(\widetilde{\lambda}_n)}
\leq \frac{\log(Fn^{-\alpha-1/2}/||Q_n^{(\alpha,M,j)}||_S)}{\log(\widetilde{\lambda}_n)}\\
&&=\frac{\log(F)-\log\left(n^{\alpha+1/2}||Q_n^{(\alpha,M,j)}||_S \right)}
{(4j+2\alpha+4)\log(n) +\log\left(\frac{\widetilde{\lambda}_n}{n^{4j+2\alpha+4}}\right)} \to 0, \quad n\to \infty,
\end{eqnarray*}
which proves the result.
\end{itemize}
\end{proof}

\section{Mehler--Heine asymptotics for Gegenbauer--Sobolev orthogonal polynomials}

 Mehler-Heine formulae  are very relevant because they describe in  detail the asymptotic behavior around point $x=1$ where we have located the perturbation (using the symmetry of these polynomials we also have the information around the point $x=-1$).  This type of asymptotics has been considered in several frameworks. In the context of Sobolev orthogonality  there is a wide  literature, we can cite the surveys \cite{mmb} and \cite{paco-xu}, and the references therein. Even more recently and conceptually closer to the inner product (\ref{pro}) we can point out \cite{mavjs,SGV16,dls2015} among others.

To establish Mehler--Heine formula  for the discrete Gegenbauer--Sobolev orthogonal polynomials considered in this work, we need the  corresponding formula for classical Jacobi orthogonal polynomials. For $\alpha,\beta$ real numbers and $s$ an integer number, it holds (see \cite[Th. 8.1.1]{sz}):
\begin{equation}\label{mhj}
\lim_{n\to \infty}n^{-\alpha}P_n^{(\alpha,\beta)}\left(\cos\left(\frac{x}{n+s}\right)\right)=\lim_{n\to\infty}\frac{1}{n^{\alpha}}P_n^{(\alpha,\beta)}
\left(1-\frac{x^2}{2(n+s)^2}\right)=(x/2)^{-\alpha}J_{\alpha}(x),
\end{equation}
uniformly on compact subsets of $\mathbb{C}, $ where $J_{\alpha}(x)$ denotes the  Bessel function of the first kind, i.e.,
\begin{equation*}
J_{\alpha}(x)=\sum_{k=0}^{\infty} \frac{(-1)^k}{k!\Gamma(k+\alpha +1)} \left( \frac{x}{2} \right)^{2k+\alpha}.
\end{equation*}
The integer number $s$ will play an important role in the proof of Theorem \ref{amh GS}. The original statement of Mehler--Heine formula for classical Jacobi polynomials was made with $s=0, $ but it can be extended for every integer number $s$ as it was established in the proof of Corollary 1 in \cite{alf-mb-pen-rez}. In that paper it was proved a more general result: if $(f_n)_n$ is a sequence of  holomorphic functions on  $\mathbb{C}$ and $(b_n)_n$  is a sequence of complex numbers satisfying $\lim_{n \to \infty} \frac{b_n}{b_{n+s}}=1$ for every integer number $s $  such that  $\left(f_n(z/b_n)\right)_n$ converges to a function $f$ uniformly on compact subsets of $\mathbb{C}$,
 then
$$\lim_{n \to \infty} f_n\left(\frac{z}{b_{n+s}}\right) = f(z)$$
uniformly on compact subsets of $\mathbb{C}$ for every integer $s.$

Thus, we can claim:

\begin{theo}\label{amh GS}
For the sequence  $\{Q_n^{(\alpha,M,j)}\}_{n\geq 0}$ the following Mehler--Heine formula holds
\begin{equation}\label{MH GS}
\lim_{n\to \infty}Q_n^{(\alpha,M,j)}\left(\cos\left(\frac{x}{n}\right)\right)=\lim_{n\to+\infty}Q_n^{(\alpha,M,j)}\left(1-\frac{x^2}{2n^2}\right)
=\varphi_{\alpha,j}(x),
\end{equation}
uniformly on compact subsets of $\mathbb{C}$, where \begin{equation} \label{z-f-l} \varphi_{\alpha,j}(x)=\sum_{i=0}^{j+1}2^i\gamma_{i}\Gamma(\alpha+i+1)(x/2)^{-\alpha}J_{\alpha+2i}(x),
 \end{equation} with the coefficients $\gamma_i$ given in (\ref{gamma_i}).
\end{theo}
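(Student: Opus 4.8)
The plan is to feed the finite connection formula \eqref{RCF} into the Mehler--Heine formula \eqref{mhj} for Jacobi polynomials term by term. The starting point is Proposition \ref{th-rcf}, which expresses, for $n\geq 2j+2$,
$$Q_n^{(\alpha,M,j)}(x)=\sum_{i=0}^{j+1}\gamma_{n,i}(1-x^2)^i\big(C_{n-i}^{(\alpha+i)}(x)\big)^{(i)}.$$
First I would rewrite each summand in terms of a Jacobi polynomial: using the derivative formula \eqref{deri} together with the translation \eqref{RGJ}, one has
$$(1-x^2)^i\big(C_{n-i}^{(\alpha+i)}(x)\big)^{(i)}=(1-x^2)^i\,\rho_{n,i}\,\frac{\Gamma(n-2i+1)\Gamma(\alpha+2i+1)}{\Gamma(n+\alpha+1)}\,P_{n-2i}^{(\alpha+2i,\alpha+2i)}(x),$$
exactly as in the proof of Theorem \ref{cota Bn}, where $\rho_{n,i}$ is given in \eqref{ros} and satisfies $\rho_{n,i}\approx n^{2i}/(2^i(\alpha+i+1)_i)$.

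Next I would substitute $x=\cos(x/n)$, so that $1-x^2=\sin^2(x/n)\approx x^2/n^2$, hence $(1-x^2)^i\approx x^{2i}/n^{2i}$. For the Jacobi factor I apply \eqref{mhj} with the parameter $\alpha+2i$ in the role of $\alpha$ and with the shift $s=-2i$ (legitimate by the extension of Mehler--Heine to arbitrary integer $s$ recalled just before the statement, since $P_{n-2i}^{(\alpha+2i,\alpha+2i)}(\cos(x/n))=P_{m}^{(\alpha+2i,\alpha+2i)}(\cos(x/(m+2i)))$ with $m=n-2i$): this gives
$$\lim_{n\to\infty}\frac{1}{n^{\alpha+2i}}P_{n-2i}^{(\alpha+2i,\alpha+2i)}\!\left(\cos\frac{x}{n}\right)=(x/2)^{-\alpha-2i}J_{\alpha+2i}(x),$$
uniformly on compact subsets of $\mathbb{C}$. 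Collecting the powers of $n$: the factor $\rho_{n,i}$ contributes $n^{2i}$, the factor $\Gamma(n-2i+1)/\Gamma(n+\alpha+1)$ contributes $n^{-\alpha-2i}$ by the Stirling-type limit \eqref{stirling}, the term $(1-x^2)^i$ contributes $n^{-2i}$, and the Jacobi limit absorbs a further $n^{\alpha+2i}$; the total power of $n$ is therefore $0$, so each summand converges. Finally I would invoke Proposition \ref{conv gamma i} to replace $\gamma_{n,i}$ by its limit $\gamma_i$, and multiply out the constants: the $i$th term converges to
$$\gamma_i\cdot x^{2i}\cdot\frac{1}{2^i(\alpha+i+1)_i}\cdot\frac{\Gamma(\alpha+2i+1)}{\Gamma(\alpha+1)}\cdot(x/2)^{-\alpha-2i}J_{\alpha+2i}(x);$$
using $(\alpha+i+1)_i=\Gamma(\alpha+2i+1)/\Gamma(\alpha+i+1)$, the constant simplifies to $2^i\gamma_i\Gamma(\alpha+i+1)$ (after absorbing $x^{2i}(x/2)^{-2i}=2^{2i}$ and regrouping, so that $2^{-i}\cdot 2^{2i}=2^i$ multiplies, and the $1/\Gamma(\alpha+1)$ cancels against part of the ratio), yielding precisely the claimed $\varphi_{\alpha,j}(x)=\sum_{i=0}^{j+1}2^i\gamma_i\Gamma(\alpha+i+1)(x/2)^{-\alpha}J_{\alpha+2i}(x)$. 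Since the sum is finite and each term converges uniformly on compacts, the whole sum does.

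The routine but error-prone part is the bookkeeping of the constants and powers of $n$ in the last step; I would double-check the $2$-powers against the known special case $M=0$ (where $\gamma_0=1$, all other $\gamma_i=0$, and the formula must reduce to the classical Gegenbauer Mehler--Heine limit $\Gamma(\alpha+1)(x/2)^{-\alpha}J_\alpha(x)$). The only genuinely non-mechanical point is the justification of the shift $s=-2i$ in the Jacobi Mehler--Heine formula, which is why the excerpt takes care to recall the result of \cite{alf-mb-pen-rez} beforehand; with that in hand there is no real obstacle. One should also note that uniformity on compact subsets of $\mathbb{C}$ is preserved because a finite sum of uniformly convergent sequences of entire functions converges uniformly, and the substitution $x\mapsto\cos(x/n)$ maps compact sets into compact sets.
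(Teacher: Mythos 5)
Your argument is correct and follows essentially the same route as the paper: substitute the connection formula (\ref{RCF}), rewritten via (\ref{deri}) and (\ref{RGJ}) in terms of $P_{n-2i}^{(\alpha+2i,\alpha+2i)}$, perform the scaling, and apply (\ref{stirling}), (\ref{ros}), (\ref{mhj}) and Proposition \ref{conv gamma i} termwise to the finite sum. The only slip is the spurious factor $1/\Gamma(\alpha+1)$ in your displayed intermediate constant --- the prefactor coming from (\ref{RGJ}) is $\Gamma(\alpha+2i+1)$ alone --- and once it is removed the constant collapses to $2^i\gamma_i\Gamma(\alpha+i+1)$ directly, with no further cancellation needed.
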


\begin{proof} Scaling adequately in (\ref{RCF}) and using (\ref{deri}), (\ref{RGJ}) and (\ref{ros}), we get
\begin{eqnarray*}
&&\lim_{n\to+\infty}Q_n^{(\alpha,M,j)}\left(1-\frac{x^2}{2n^2}\right)=\\
&&\lim_{n\to+\infty}\sum_{i=0}^{j+1}\gamma_{n,i}\rho_{n,i}\frac{x^{2i}}{n^{2i}}\left(1-\frac{x^2}{4n^2}\right)^i
\frac{\Gamma(n-2i+1)\Gamma(\alpha+2i+1)}{\Gamma(n+\alpha+1)}P_{n-2i}^{(\alpha+2i, \alpha+2i)}\left(1-\frac{x^2}{2n^2}\right)=\\
&&\lim_{n\to+\infty}\sum_{i=0}^{j+1}\gamma_{n,i}x^{2i}\frac{\rho_{n,i}}{n^{2i}}\left(1-\frac{x^2}{4n^2}\right)^i
\frac{n^{\alpha+2i} \Gamma(n-2i+1)\Gamma(\alpha+2i+1) }{\Gamma(n+\alpha+1)}\frac{P_{n-2i}^{(\alpha+2i, \alpha+2i)}\left(1-\frac{x^2}{2n^2}\right)}{n^{\alpha+2i}}.
\end{eqnarray*}
It only remains to apply the asymptotic behaviors given by (\ref{stirling}), (\ref{ros}) and (\ref{mhj}) to obtain the result.
\end{proof}

Next, we are going to pay attention  to the zeros of the polynomials $Q_n^{(\alpha,M,j)}.$ When $j=0$ the inner product (\ref{pro}) is standard, i.e, it is related to the measure $\mu$ given by $d\mu=(1-x)^{\alpha}(1+x)^{\beta}dx+M(\delta (x+1)+\delta(x-1))$ where $\delta(x)$ is the Dirac's delta function. Thus, all the zeros of $Q_n^{(\alpha,M,j)}$ are real and they are within $(-1,1).$

However, when $j>0$ the situation changes. It was proved by  H. G. Meijer in \cite[Th. 4.1]{mei} (see also \cite[Lemma 2]{allore}) in a more general framework that the polynomial $Q_{n}^{(\alpha,M,j)}(x),$ $n\geq 1,$   has $n$ real and simple zeros and at most two of them are located outside  $(-1,1).$ However, on the one hand, using Proposition \ref{ASeven} we have that $\displaystyle \lim_{n\to+\infty}Q_n^{(\alpha,M,j)}(1)=\frac{-j}{j+\alpha+1}<0,$ but on  the other hand, the leading coefficient of $Q_{n}^{(\alpha,M,j)}(x)$ is $k_n(\alpha)>0$ given by (\ref{lc}), so we have  $\displaystyle\lim_{x\to+\infty}Q_n^{(\alpha,M,j)}(x)=+\infty.$ Thus, we deduce that there exists a zero within $(1,+\infty)$ and, by the symmetry of the polynomials, another one within $(-\infty,-1). $ Therefore, we can summarize it in the following result.
\begin{prop}\label{zeros-q}
If $j>0$ the polynomial $Q_{n}^{(\alpha,M,j)}(x),$ $n\geq 1,$   has $n$ real and simple zeros and exactly two of them are located outside  $(-1,1).$ If $j=0, $ then all the zeros are within $(-1,1).$
\end{prop}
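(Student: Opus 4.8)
The plan is to combine a known structural result about zeros of discrete Sobolev orthogonal polynomials with the explicit sign information already available from Proposition \ref{relasy1}. First I would invoke the result of H.\ G.\ Meijer \cite[Th.\ 4.1]{mei} (or \cite[Lemma 2]{allore}), which applies to our inner product (\ref{pro}) with mass points at $\pm 1$: it guarantees that $Q_n^{(\alpha,M,j)}$ has $n$ real and simple zeros, at most two of which lie outside $(-1,1)$. This gives the upper bound ``at most two'' for free; the work is to show that \emph{exactly} two lie outside when $j>0$, and that \emph{none} do when $j=0$.

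For the case $j=0$, the inner product (\ref{pro}) is standard in the sense that it is $(f,g)_S=\int_{-1}^1 fg\,d\mu$ with $d\mu=(1-x^2)^\alpha dx + M(\delta_{-1}+\delta_1)$ a positive Borel measure on $[-1,1]$. Classical orthogonal polynomial theory then forces all $n$ zeros of $Q_n^{(\alpha,0,0)}$ into the interior of the convex hull of $\mathrm{supp}(\mu)=[-1,1]$, i.e.\ into $(-1,1)$. So this case is immediate.

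For $j>0$ I would argue by a sign change at $x=1$ (and, symmetrically, at $x=-1$). By Proposition \ref{relasy1} with $k=0$, $\lim_{n\to\infty}Q_n^{(\alpha,M,j)}(1)=\dfrac{-j}{j+\alpha+1}<0$, so for $n$ large enough $Q_n^{(\alpha,M,j)}(1)<0$. On the other hand the leading coefficient $k_n(\alpha)>0$ from (\ref{lc}) is positive, whence $Q_n^{(\alpha,M,j)}(x)\to+\infty$ as $x\to+\infty$. By the intermediate value theorem there is at least one zero in $(1,+\infty)$; by symmetry of $Q_n^{(\alpha,M,j)}$ (even/odd according to the parity of $n$, inherited from the $C_i^{(\alpha)}$ expansions) there is at least one zero in $(-\infty,-1)$. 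Combined with Meijer's ``at most two outside $(-1,1)$'', exactly one zero lies in $(1,+\infty)$ and exactly one in $(-\infty,-1)$, so exactly two lie outside $(-1,1)$.

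The main obstacle is a quantifier subtlety: Proposition \ref{relasy1} only gives the sign of $Q_n^{(\alpha,M,j)}(1)$ for $n$ sufficiently large, whereas the claim is stated for all $n\geq 1$. To make the statement fully rigorous for every $n$ one would need either a direct sign computation of $Q_n^{(\alpha,M,j)}(1)$ valid for all $n$ — for instance via the connection formula (\ref{RCF}) evaluated at $x=1$, which for $n\geq 2j+2$ reduces the sign question to a finite combination involving the $\gamma_{n,i}$ and the positive quantities $(C_{n-i}^{(\alpha+i)}(1))^{(i)}$ — or one accepts that the proposition, like much of the surrounding asymptotic analysis, is intended ``for $n$ large enough.'' I would present the argument as above, remarking that the asymptotic sign suffices for the applications in Section 4 and that the finitely many remaining small values of $n$ can be checked directly from (\ref{GSeven})--(\ref{GSodd}).
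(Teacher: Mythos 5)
Your proof is essentially identical to the paper's: it invokes Meijer's theorem for the ``at most two'' bound, treats $j=0$ as orthogonality with respect to the standard positive measure $d\mu=(1-x^2)^{\alpha}dx+M(\delta(x+1)+\delta(x-1))$, and for $j>0$ combines the negative limit $\lim_{n\to\infty}Q_n^{(\alpha,M,j)}(1)=\frac{-j}{j+\alpha+1}<0$ with the positive leading coefficient and the symmetry of the polynomials to produce one zero in $(1,+\infty)$ and one in $(-\infty,-1)$. The quantifier subtlety you flag is genuine, but the paper's own argument shares it exactly --- it also deduces the sign of $Q_n^{(\alpha,M,j)}(1)$ only from the limit, so its conclusion is likewise rigorous only for $n$ sufficiently large.
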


Now, we conclude giving the asymptotic behavior of the zeros. It is a well--known consequence of  Theorem \ref{amh GS}. It is only necessary to apply Hurwitz's theorem (see \cite[Th. 1.91.3]{sz}) to (\ref{MH GS}). We denote by $[a]$ the integer part of $a.$ Thus, we have

\begin{prop}\label{zeros-ab} For $j>0,$ we denote by $s_{n,i}, $ $i=1,\ldots, [n/2]-1, $ the $ [n/2]-1$ positive zeros of $Q_{n}^{(\alpha,M,j)}$ within $(0,1)$ in a decreasing order, i.e., $s_{n, [n/2]-1}< s_{n, [n/2]-2}<\cdots< s_{n,1}.$ Then,
$$\lim_{n\to \infty} n\arccos(s_{n,i})=y_i, \quad i=1,\ldots, [n/2]-1,
$$
where $0<y_1<\cdots <y_{[n/2]-1}$ denote the first $[n/2]-1$ positive real zeros of the function $\varphi_{\alpha,j}$ given in (\ref{z-f-l}).
\end{prop}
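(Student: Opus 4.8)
The plan is to deduce Proposition~\ref{zeros-ab} as a standard consequence of the Mehler--Heine formula in Theorem~\ref{amh GS} together with Hurwitz's theorem on the limit of zeros of uniformly convergent sequences of holomorphic functions. The key point is that the scaled polynomials $z\mapsto Q_n^{(\alpha,M,j)}(\cos(z/n))$ are entire functions of $z$ (since $\cos(z/n)$ is entire and $Q_n^{(\alpha,M,j)}$ is a polynomial) which, by (\ref{MH GS}), converge to $\varphi_{\alpha,j}(z)$ uniformly on compact subsets of $\mathbb{C}$. Note $\varphi_{\alpha,j}$ is not identically zero: from (\ref{z-f-l}) the term $i=0$ contributes $2^0\gamma_0\Gamma(\alpha+1)(x/2)^{-\alpha}J_\alpha(x)$, and near $x=0$ one has $(x/2)^{-\alpha}J_{\alpha+2i}(x)\sim \frac{x^{2i}}{2^{2i}\Gamma(\alpha+2i+1)}$, so $\varphi_{\alpha,j}(0)=\gamma_0 = -j/(j+\alpha+1)\neq 0$ for $j>0$; thus the zeros of $\varphi_{\alpha,j}$ form a discrete set.

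First I would set $g_n(z):=Q_n^{(\alpha,M,j)}(\cos(z/n))$ and record that each $g_n$ is entire, that $g_n\to\varphi_{\alpha,j}$ uniformly on compacta, and that $\varphi_{\alpha,j}\not\equiv 0$. Next I would relate the zeros of $g_n$ in a fixed disk to the zeros $s_{n,i}\in(0,1)$ of $Q_n^{(\alpha,M,j)}$: if $s_{n,i}=\cos\theta_{n,i}$ with $\theta_{n,i}=\arccos(s_{n,i})\in(0,\pi/2)$, then $z_{n,i}:=n\theta_{n,i}$ is a zero of $g_n$. By Proposition~\ref{zeros-q} and the known interlacing/monotonicity of the zeros (all $n$ zeros real and simple, and the $[n/2]-1$ zeros inside $(0,1)$ accumulate only at the endpoint $1$, i.e. $\theta_{n,i}\to 0$ as $n\to\infty$ for each fixed $i$), for any fixed compact disk $\overline{D(0,R)}$ the zeros $z_{n,i}$ with $i=1,\dots,[n/2]-1$ lying in that disk are exactly the $n$-scalings of the $i$ largest positive zeros for $i$ up to some index depending on $R$ and $n$. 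Then Hurwitz's theorem (\cite[Th. 1.91.3]{sz}) applied on $\overline{D(0,R)}$ gives that the zeros of $g_n$ in $D(0,R)$ converge (counting multiplicity) to the zeros of $\varphi_{\alpha,j}$ in $D(0,R)$; matching them up in increasing order yields $z_{n,i}=n\arccos(s_{n,i})\to y_i$ for each fixed $i$, where $0<y_1<y_2<\cdots$ are the positive real zeros of $\varphi_{\alpha,j}$.

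The one subtlety worth spelling out — and the main obstacle — is the bookkeeping that matches the $i$-th positive zero $s_{n,i}$ of $Q_n^{(\alpha,M,j)}$ (ordered by decreasing value in $(0,1)$, hence increasing value of $\arccos$) with the $i$-th positive zero $y_i$ of $\varphi_{\alpha,j}$, uniformly in $n$ for $n$ large. This requires knowing (a) that the scaled zeros $z_{n,i}$ are genuinely real (which follows from $s_{n,i}\in(0,1)\subset\mathbb{R}$ and the branch choice of $\arccos$), (b) that no zeros ``escape to infinity or collapse to $0$'' in a way that spoils the count — controlled by fixing $R$ with $y_i<R<y_{i+1}$, applying Hurwitz on $D(0,R)$, and using that for $n$ large $g_n$ has exactly as many zeros in $D(0,R)$ as $\varphi_{\alpha,j}$ does — and (c) that these in-disk zeros of $g_n$ really are the top $i$ values $n\arccos(s_{n,1}),\dots,n\arccos(s_{n,i})$ rather than scalings of zeros from elsewhere; this is where Proposition~\ref{zeros-q} is used, since it pins down that all but two zeros lie in $(-1,1)$ and, by symmetry, the positive ones in $(0,1)$ are precisely the ones whose $n$-scaled $\arccos$ can stay bounded. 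Once this matching is in place, the conclusion $\lim_{n\to\infty} n\arccos(s_{n,i})=y_i$ for $i=1,\dots,[n/2]-1$ is immediate, and I would remark that the analogous statement near $x=-1$ follows from the symmetry $Q_n^{(\alpha,M,j)}(-x)=(-1)^nQ_n^{(\alpha,M,j)}(x)$.
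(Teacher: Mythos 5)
Your proposal is correct and follows exactly the route the paper takes: the paper's proof consists of the single remark that the result is a consequence of the Mehler--Heine formula (\ref{MH GS}) via Hurwitz's theorem \cite[Th. 1.91.3]{sz}. You simply spell out the details (entirety of the scaled functions, $\varphi_{\alpha,j}\not\equiv 0$ since $\varphi_{\alpha,j}(0)=\gamma_0\neq 0$ for $j>0$, and the zero-matching bookkeeping) that the paper leaves implicit.
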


For $j=0$  the inner product (\ref{pro}) appears in \cite{bra-cas-mb} as a very particular case in a context which involves continuous Sobolev polynomials. Notice that in this situation using Corollary \ref{limit gamma i} we deduce that the limit function (\ref{z-f-l}) is
$$\varphi_{\alpha,0}(x)= -\Gamma(\alpha+1) (x/2)^{-\alpha}J_{\alpha+2}(x).$$
The above result was obtained in \cite[Proposition 2]{bra-cas-mb} but in that paper the inner product considered was
\begin{equation*}
(f,g)_S:=\int_{-1}^1f(x)g(x)(1-x^2)^{\alpha-1/2}dx+M\left(f(-1)g(-1)+f(1)g(1)\right),
\end{equation*}
where the corresponding Sobolev orthogonal polynomials were  monic. Then, to compare both results we must take into account these facts. Anyway, the zeros of $Q_{n}^{(\alpha,M,0)}$  behave asymptotically like the zeros of $x^{-\alpha}J_{\alpha+2}(x)$ (obviously in \cite[Proposition 2]{bra-cas-mb} $\alpha$ must be changed by $\alpha+1/2$). Thus, following the above notation,  in the case $j=0$ we get
$$ \lim_{n\to \infty}s_{n,1}=1, \quad \lim_{n\to \infty} n\arccos(s_{n,i})=j_i^{(\alpha+2)}, \quad i=2,\ldots, [n/2],
$$
where $0<j_1^{(\alpha+2)}<\cdots< j_{[n/2]}^{(\alpha+2)}$ denote the first $[n/2]$ positive zeros of the Bessel function of the first kind $J_{\alpha+2}.$

\bigskip

\noindent \textbf{Acknowledgments:} The authors JFMM and JJMB are partially supported  by Research Group FQM-0229 (belonging to Campus of International Excellence CEIMAR). The author JFMM is funded by a grant of Plan Propio de la Universidad de Almer\'{\i}a. The author JJMB is partially supported by Ministerio de Econom\'{i}a y Competitividad of Spain and European Regional Development Fund, grant  MTM2014-53963-P, and Junta de Andaluc\'{i}a (excellence grant P11-FQM-7276).
The author JFMM is grateful to Lance L. Littlejohn and the Department of Mathematics of Baylor University for an invitation to a research stay at Baylor University. This work was partially carried out during a research stay of the author JFMM at this university.

%-------

\end{document}